\newcommand{\R}{\mathbb R}
\newcommand{\e}{\epsilon}
\newcommand{\ep}{\varepsilon}
\newcommand{\LP}{\left( }
\newcommand{\RP}{\right) }
\newcommand{\sgn}{\text{sgn}}
\newcommand{\supp}{\text{supp}}
\newcommand{\bgamma}{\boldsymbol{\gamma}}
\newcommand{\tPhi}{\tilde{\Phi}}
\newcommand{\wm}{|w|_-}
\newtheorem{Th}{Theorem}[section]
\newtheorem{Prop}[Th]{Proposition} 
\newtheorem{Lemma}[Th]{Lemma}
\theoremstyle{definition} 
\newtheorem{Rem}[Th]{Remark}
\newtheorem{Assump}[Th]{Assumption}
\tikzstyle{vertex}=[circle, draw, inner sep=0pt, minimum size=6pt]
 \title{The incompressible limit of an inhomogeneous model of tissue growth}
 \author{Anthony Sulak, Olga Turanova}
 \date{\today}
\begin{document}

 \maketitle  

\begin{abstract}
We study a  porous medium equation that models tissue growth in a heterogeneous environment. 
We show that, in the incompressible limit, solutions converge to those of a weak form of a Hele-Shaw type free boundary problem. 
To obtain enough compactness to take the limit, we establish an $L^4$ bound on the gradient of the pressure and an estimate of Aronson-B\'{e}nilan type.

\end{abstract}

\section{Introduction}
The focus of our work is the $m\rightarrow\infty$ limit (called the incompressible limit or stiff pressure limit) of the inhomogeneous porous medium equation with reaction, 
\begin{align} \label{eqn: PME}
         \partial _t u_m = \nabla \cdot \LP \frac{u_m}{a(x,t)} \nabla p_m \RP  + \frac{u_m}{a(x,t)}\Phi (x,t,p_m) \text{ on }\R^d\times (0,\infty),
\end{align}
where the pressure $p_m$ is given in terms of the density $u_m$ by the constitutive law,
\begin{equation}
    \label{eq:constitutive}
p_m = \frac{m}{m-1} \LP \frac{u_m }{b(x,t)} \RP ^ {m-1}, \quad m\geq 2.
\end{equation}
The coefficients $a$ and $b$ are assumed to be bounded from above and strictly away from zero and satisfy certain structural assumptions. It is also assumed  that the growth term $\Phi$  is strictly decreasing in $p$ and that there exists $p_M>0$ with $\Phi(x,t, p_M)=0$, which corresponds to a ceiling on the maximum pressure that the medium can support.  (The precise assumptions are listed in Section \ref{ss:assumptions}.)  In addition, we remark that \eqref{eqn: PME} may also be written as,
\begin{align} \label{eqn: PME density}
    \partial _t u_m = \nabla \cdot \LP \frac{b}{a} \nabla \LP \frac{u_m}{b} \RP^m \RP  + \frac{u_m}{a}\Phi (x,t,p_m). 
\end{align}

The homogeneous version of this model ($a\equiv b\equiv 1$ and $\Phi$ depending only on $p$) was proposed by Byrne and Drasdo in \cite{ByrneDrasdo} as a model for tumor growth. Its incompressible limit was shown by Perthame, Quir\`{o}s, and V\'{a}zquez in \cite{PQV2014HS} to be a free boundary problem of Hele-Shaw type. 
Since then, there has been a large number of works on models of this type: see Section \ref{ss:lit} for a  brief literature review. Here, the presence of the nonconstant coefficients $a$ and $b$   represents heterogeneity in the underlying medium and in the cellular packing density, respectively \cite{TangVaucheletCheddadietal, Vazquez2007PME}.

In our main result, Theorem \ref{Prop: main prop}, we establish that, as $m\rightarrow \infty$, the density and pressure converge to the pair $(u_\infty, p_\infty)$, which is the (unique) weak solution of 
\begin{equation}
\label{eq:limiting eq}
\left\{
    \begin{split}&
    \partial _t  u_\infty = \nabla \cdot \LP \frac{b}{a}  \nabla p_\infty \RP + \frac{u_\infty}{a} \Phi(x,t,p_\infty)\\ 
    & \left(1-\frac{u_\infty}{b}\right)p_\infty = 0 \text{ almost everywhere.} 
    \end{split}
\right.
\end{equation}

In addition, we provide more detail on the behavior of the limiting density and pressure. The heuristics for this can be seen by examining the equation that the pressure satisfies,
\begin{equation}
\label{eqn: PME Pressure 2}
\partial _t p_m  - 
    \frac{\vert \nabla p_m \vert ^2}{a} 
 =(m-1) \frac{p_m}{b} \LP
    \nabla \cdot\left(\frac{b}{a}\nabla p_m\right) 
    + \frac{b}{a}\Phi (x,t,p_m) -\partial_tb \RP,
    \end{equation}
which is obtained by multiplying (\ref{eqn: PME density}) by $\frac{m}{b}\left(\frac{u_m}{b}\right)^{m-2}$ and performing standard manipulations. It is natural to guess that, in the $m\rightarrow\infty$ limit of (\ref{eqn: PME Pressure 2}), the  righthand side of (\ref{eqn: PME Pressure 2}) converges to zero: 
\begin{equation}
        \label{eq:complementarity}
\frac{p_\infty}{b}  \LP \nabla\cdot\left(\frac{b}{a}\nabla p_\infty\right) +  \frac{b}{a}\Phi (x,t,p_\infty ) -  \partial _t b  \RP= 0.
    \end{equation} 
   This is the so-called complementarity condition. We prove that $(u_\infty, p_\infty)$ does indeed satisfy \eqref{eq:complementarity} in the sense of distributions. 
   
   The complementarity condition indicates that, for each time $t$, there are two regions of interest: the region where $p_\infty$ is zero, and the region where $p_\infty$ is positive (and therefore the term in the parentheses of \eqref{eq:complementarity} is identically zero). Thus, it is natural to attempt to characterize the evolution of the boundary between these two regions. 
   Examining \eqref{eqn: PME Pressure 2} suggests that, on this boundary, the limit as $m\rightarrow \infty$ of the lefthand side of \eqref{eqn: PME Pressure 2} should be identically zero:
   \[
   \frac{\partial_tp_\infty}{|\nabla p_\infty|}=\frac{|\nabla p_\infty|   }{a}.
   \]
   The lefthand side of the previous line is exactly the    normal velocity of the zero level set of $p_\infty$. Thus, formally, the limiting normal velocity is exactly $\frac{|\nabla p_\infty|   }{a}$. The model \eqref{eqn: PME} is the first one for which an inhomogeneity appears in the $|\nabla p_\infty|$ term of the limiting velocity upon taking the incompressible limit, even in the absence of external density. Making the heuristics about the velocity of $\partial\{p_\infty>0\}$ rigorous is left for future work.

\subsection{Notation and assumptions}
For $T>0$, we denote $Q_T = \R^d \times (0,T)$, and use $C$ to denote any positive constant that is independent of $m$.

\label{ss:assumptions}
\begin{Assump}[Basic assumptions]\label{Assump: a,b,Phi}
    Suppose  there exists $\Lambda>0$ such that 
    \[
    1/\Lambda \leq a(x,t),\, b(x,t) \leq \Lambda
    \]
     for all $(x,t) \in \R^d\times (0,\infty)$ and $\|a\|_{C^4(\R^d\times (0,\infty))}+ \|b\|_{C^4(\R^d\times (0,\infty))}\leq \Lambda$. 
    
    Suppose $\Phi \in C^3 (\R^d\times (0,\infty) \times [0, \infty])$, and, for some $p_M>0$ and $\lambda>0$,  satisfies $\partial _p \Phi(x,t,p) \leq -\lambda$  and $\Phi (x,t,p_M) = 0$, for all $x$, $t$, and $p$.  
\end{Assump}

\begin{Assump}[Initial Data]  \label{Assump: initial data conditions}
For some $u^0 \in L^1 (\R^d)$, suppose that the initial data $u_m ^0$ satisfies, 
\begin{align} 
    \begin{cases}
        u_m ^0 \geq 0,& \quad  \frac{m}{m-1} \LP \frac{u_m ^0}{b} \RP ^{m-1}  \leq p_M, \\
        \Vert u_m ^0 - u^0 \Vert _{L^1 (\R^d)} \rightarrow 0 \ \text{as} \ m \rightarrow \infty,& \quad \Vert \partial _{x_i} u_m^0 \Vert _{L^2 (\R^d)} \leq C, i = 1, \ldots, d,
    \end{cases} 
\end{align}
and $\text{supp}(u_m ^0) \subset \Omega _0$ for some compact $\Omega _0 \subset \R^d$. 
Suppose also that there exists a constant $C>0$ such that 
\begin{align} \label{eq: assump p0 bds}
     \Vert \nabla p_m ^0 \Vert _{L^2 (\R^d)} + \Vert \Delta p_m ^0 \Vert _{L^2 (\R^d)} + \Vert \partial _t p_m^0 \Vert _{L^1 (\R^d)} \leq C.
\end{align}
\end{Assump}

\begin{Assump}[Coefficients] \label{Assump: supersol construction}
    Suppose that either, 
    \begin{enumerate}[(i)]
    \item $d=1$; or,
        \item \label{item:radial} $a(x,t) = a(|x|,t)$, $b(x,t) = b(|x|,t)$ for all $x$, $t$, i.e. $a$, $b$ are radial in space; or, 
        \item \label{item:growth}  there exists $R> 0$ and $0< \e \leq \frac{d - \frac{1}{2}}{\Lambda ^2}$ such that, if $|x| \geq R$, then         \[
        \left \vert \nabla \LP b(x,t)/a(x,t) \RP \right \vert \leq \e|x|^{-1}.
        \] 
    \end{enumerate}
\end{Assump}

\begin{Assump}[Structural]
\label{A:tildelambda}
    Let $\lambda$ be as in Assumption \ref{Assump: a,b,Phi}. Suppose there exists $\tilde\lambda>0$ such that, for all $x,t\in \R^d\times (0,\infty)$,
       \[
    \Delta \log\left(b(x,t)/a(x,t)\right)\geq \tilde{\lambda}-\lambda.
    \]
    \end{Assump}

\begin{Rem}[Remarks on the assumptions]
Assumptions  \ref{Assump: a,b,Phi} and \ref{Assump: initial data conditions}  are analogous to standard assumptions  in the literature, such as in \cites{david2021free,  GKM2022HS, chu2022hele, PQV2014HS}.  

The third assumption is similar to \cite[Assumption 3]{chu2022hele}. Indeed, both  \cite[Assumption 3]{chu2022hele} and our Assumption \ref{Assump: supersol construction} are used to construct barriers to the equation for the pressure in order to conclude that $p_m$ is bounded and has compact support, independent of $m$: see Lemma \ref{lem:basic}(\ref{item:Linfty and cpt supp}). 

Assumption \ref{A:tildelambda} is used in an essential way in establishing a uniform bound on the time derivative of the density and on the time derivative of the pressure in Lemma \ref{lem:basic}(\ref{item:u L1}). We are not aware of an analogous assumption appearing elsewhere in the literature.

Relaxing the requirements of   Assumptions \ref{Assump: supersol construction} and  \ref{A:tildelambda} is a direction for future work. We note that if Lemma \ref{lem:basic}(\ref{item:Linfty and cpt supp}),(\ref{item:u L1}) could be established by other means, the proofs of the other results in this paper would go through without using Assumptions \ref{Assump: supersol construction} and  \ref{A:tildelambda}. 

We note that one choice of $\Phi$ satisfying Assumption \ref{Assump: a,b,Phi} is  $\Phi (x,t, p) = g(p) h(x,t)$, where $g$ is strictly decreasing, $g(p_M) = 0$, and $h$ is a positive function that is bounded. A standard choice for $g$ is a linear function $g(p) = C(p_M - p)$, where $C>0$. 
\end{Rem}

\subsection{Main Results}

Our  main result is that solutions of \eqref{eqn: PME density} converge to those of \eqref{eq:limiting eq} and satisfy the complementarity relation \eqref{eq:complementarity}.

\begin{Th}[Convergence and complementarity relation] \label{Prop: main prop}
Suppose Assumptions \ref{Assump: a,b,Phi}, \ref{Assump: initial data conditions},  \ref{Assump: supersol construction}, and \ref{A:tildelambda}  hold and fix $T>0$. Suppose $u_m$ is a weak solution of \eqref{eqn: PME} on $\R^d\times (0,T)$. Then, up to a sub-sequence,  $u_m$ and $p_m$ converge strongly in $L^1 (Q_T)$ as $m \rightarrow \infty$ to $u_\infty\in L^\infty(Q_T)$, $p_\infty\in L^2(0,T; H^1(\R^d))$ respectively, which satisfy  \eqref{eq:limiting eq} in the weak sense on $\R^d\times (0,T)$. Moreover,
     \eqref{eq:complementarity} holds in the sense of distributions on $\R^d\times (0,T)$.

\end{Th}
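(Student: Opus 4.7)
The strategy follows the four-step template standard for incompressible limits, adapted to the inhomogeneous setting.

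\emph{Step 1 (Compactness).} The estimates of Lemma~\ref{lem:basic}---uniform $L^\infty$ bounds and uniform compact support for $u_m$ and $p_m$, the $L^4(Q_T)$ bound on $\nabla p_m$, and $L^1$ control of $\partial_t u_m$ and $\partial_t p_m$---allow me to extract, via Aubin-Lions (or Fr\'echet-Kolmogorov in space combined with the time-derivative bound), a subsequence along which $u_m\to u_\infty$ and $p_m\to p_\infty$ strongly in $L^1(Q_T)$ (and a.e.), with $\nabla p_m\rightharpoonup \nabla p_\infty$ weakly in $L^2(Q_T)$, $u_\infty\in L^\infty(Q_T)$, and $p_\infty\in L^2(0,T;H^1(\R^d))\cap L^\infty(Q_T)$.

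\emph{Step 2 (Algebraic complementarity).} The constitutive law \eqref{eq:constitutive} together with $p_m\leq p_M$ yields $u_m/b \leq (\tfrac{m-1}{m}p_M)^{1/(m-1)}\to 1$, so $u_\infty\leq b$ a.e. On $\{u_\infty<b\}$, the same law forces $p_m\to 0$ pointwise, hence $p_\infty=0$ there, giving the pointwise identity $(1-u_\infty/b)p_\infty=0$.

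\emph{Step 3 (Limiting equation).} Using the identity $(u_m/b)^m=\tfrac{m-1}{m}(u_m/b)p_m$, I rewrite the diffusive flux of \eqref{eqn: PME density} in terms of $(u_m/b)p_m$. By Step~2, $(u_m/b)p_m\to p_\infty$ strongly in $L^1$ (on $\{u_\infty=b\}$ this is immediate, on $\{u_\infty<b\}$ both sides vanish), so the flux converges distributionally to $(b/a)\nabla p_\infty$. Combined with strong $L^1$ convergence of $u_m$ and of $\Phi(\cdot,\cdot,p_m)$, this yields \eqref{eq:limiting eq} in the weak sense.

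\emph{Step 4 (Complementarity condition \eqref{eq:complementarity}).} I divide the pressure equation \eqref{eqn: PME Pressure 2} by $(m-1)$ and pair against $\phi\in C_c^\infty(Q_T)$. The left-hand side tends to zero in distribution by the $L^\infty$ bound on $p_m$ and the $L^2$ bound on $\nabla p_m$ divided by $m-1$. On the right, the two zero-order terms pass by strong convergence of $p_m$, so the argument reduces to identifying the distributional limit of $(p_m/b)\nabla\cdot((b/a)\nabla p_m)$. Expanding via the product rule yields four terms; three involve only $\nabla p_m^2$ or $\Delta p_m^2$ and pass to the limit by the strong $L^1$ convergence of $p_m^2$, while the remaining term is $-\int\!\int\phi |\nabla p_m|^2/a$, whose limit identification requires $\int\phi |\nabla p_m|^2/a\to \int\phi|\nabla p_\infty|^2/a$ for every $\phi\geq 0$. \emph{This is the main obstacle}, since weak $L^2$ convergence of $\nabla p_m$ only delivers $\liminf\geq$. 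To close the gap, I would combine the Aronson-B\'enilan type estimate of Lemma~\ref{lem:basic} (which provides one-sided control on the Laplacian-type quantity, hence semi-concavity-style compactness) with an energy identity obtained by testing \eqref{eqn: PME density} against $p_m$: matching this identity in the limit with its $p_\infty$ counterpart forces $\int|\nabla p_m|^2\to\int|\nabla p_\infty|^2$, upgrading weak to strong $L^2_{\mathrm{loc}}$ convergence of $\nabla p_m$ and completing the proof.
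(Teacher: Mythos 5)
Your Steps 1--3 track the paper closely: strong $L^1$ and a.e.\ convergence from the $W^{1,1}$ bounds of Lemma~\ref{lem:basic} plus Rellich--Kondrachov; the algebraic identity forcing $(1-u_\infty/b)p_\infty=0$; and rewriting the flux via $(u_m/b)^m=\tfrac{m-1}{m}(u_m/b)\,p_m$ so that the product $(u_m/b)p_m\to p_\infty$ in $L^1$ carries the limit (the paper does this by one extra integration by parts, but it is the same mechanism). The differences are concentrated in how the quadratic term $|\nabla p_m|^2$ is handled.

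The key gap is in your Step~4 (and, correspondingly, what you claim in Step~1). You only extract \emph{weak} $L^2$ convergence of $\nabla p_m$ and then try to upgrade it at the end via an energy-matching argument obtained by testing \eqref{eqn: PME density} against $p_m$. That identity produces $\iint\frac{u_m}{a}|\nabla p_m|^2$ (note the weight $u_m$, which degenerates outside the support), paired with a term of the form $\iint u_m\,\partial_t p_m$; you have no control on $\partial_t p_\infty$ beyond $\partial_t p_m$ bounded in $L^1$, so this identity does not pass to the limit in a way that isolates $\iint\phi\,|\nabla p_m|^2/a\to\iint\phi\,|\nabla p_\infty|^2/a$. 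As written, this step does not close. What the paper actually does is establish strong $L^2(Q_T)$ convergence of $\nabla p_m$ already at the compactness stage, before touching the complementarity relation: it invokes the $L^4$ bound on $\nabla p_m$ from Proposition~\ref{Prop: L3 for Dp} together with the $L^3$ Aronson--B\'enilan estimate on $|\Delta p_m|_-$ from Proposition~\ref{Prop: AB-est L3}, and cites an argument identical to \cite[Lemma 3.6]{david2021convective}. That is precisely the reason those two stronger pressure estimates are proved at all; your proposal invokes neither Proposition~\ref{Prop: L3 for Dp} (beyond the $L^4$ bound mentioned in passing) nor Proposition~\ref{Prop: AB-est L3} where they are actually needed. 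Also a reference slip: you attribute the Aronson--B\'enilan estimate to Lemma~\ref{lem:basic}, but it is Proposition~\ref{Prop: AB-est L3}; Lemma~\ref{lem:basic} contains only the basic $L^\infty$, $L^1$-derivative, and compact-support bounds.
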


\begin{Rem}[Existence and uniqueness for \eqref{eq:limiting eq}] We remark that our main convergence result implies the existence of weak solutions to \eqref{eq:limiting eq}. The comparison principle (and therefore uniqueness) for  solutions to \eqref{eq:limiting eq} follows by a variant of Hilbert's duality method, which is by now standard for equations like \eqref{eq:limiting eq}: see, for example, \cite[Section 3]{PQV2014HS}, \cite[Section 4]{chu2022hele}, and \cite[Section 5]{david2021convective}. The uniqueness of solutions to \eqref{eq:limiting eq}, together with our main result,  thus implies that the entire sequence $(u_m, p_m)$ converges to the unique weak solution $(u_\infty, p_\infty)$ of \eqref{eq:limiting eq}. 
\end{Rem}

\subsection{Strategy of proof}
The main challenge in proving Theorem \ref{Prop: main prop} is obtaining enough uniform (in $m$) estimates on the $u_m$ and the $p_m$. To achieve this, we begin by establish some basic bounds, such as  on the $L^1$ norm of the time and space derivatives of $u_m$ and $p_m$ (Lemma \ref{lem:basic}). Then we prove two key estimates on the pressure. One is a uniform bound on $\|\nabla p_m\|_{L^4(\R^d)}$ (Proposition \ref{Prop: L3 for Dp}). The other is  an estimate of Aronson-B\'enilan type: we bound the negative part of $\Delta p_m$ in $L^3$, uniformly in $m$ (Proposition \ref{Prop: AB-est L3}).

 Note that the righthand side of \eqref{eqn: PME Pressure 2} may be expressed as 
 \[
 (m-1) p_m\left(\frac{1}{a}\Delta p_m +\frac{1}{b}\nabla\left( \frac{b}{a}\right)\cdot\nabla p_m + \frac{1}{a}\Phi (x,t,p_m) -\frac{\partial_tb}{b}\right).
 \]
On the one hand, we note that there is a drift term, $\frac{1}{b}\nabla\left( \frac{b}{a}\right)\cdot \nabla p_m$. We handle the difficulties posed by this term by adapting techniques developed for incompressible limits in the presence of drift and advection  in Chu's \cite{chu2022hele} 
 and David and Schmidtchen's \cite{david2021convective}, respectively.  On the other hand, we see that the highest order  term,  $\Delta p_m$, has a variable coefficient, $\frac{1}{a}$. Our work is the first one to study the incompressible limit in the presence of such a term, which presents new challenges.  
 For example, even when establishing the basic estimates of Lemma \ref{lem:basic}, we have to first bound $\|\partial_t u_m\|_{L^1(\R^d)}$  and then use that  to control  $\|\nabla u_m\|_{L^1(\R^d)}$, whereas in other works, the estimate on the spacial derivative can be established first. And, when establishing the stronger bounds on the pressure, there are many more terms that need to be kept track of and controlled.

\begin{Rem} \label{remark: test function approx}
    We often manipulate the equations for $u_m$ and $p_m$ pointwise and/or differentiate the equations. These manipulations are justified by: (i) first, approximating $u_m$, the solution of \eqref{eqn: PME} with initial data $u^0$, by $u_{m,\epsilon}$, the solution of \eqref{eqn: PME} with initial data $u_m^0+\epsilon$, (ii)  second, establishing the desired estimate for $u_{m,\epsilon}$, which is  uniformly positive and thus smooth, and (iii) third, taking the limit $\epsilon\rightarrow 0$. See, for example, \cite{Vazquez2007PME}*{Section 9.3}.
\end{Rem}

\subsection{Literature review}
\label{ss:lit}

The incompressible limit of a porous medium equation with source term was first studied by Perthame, Quir\`{o}s, and V\'{a}zquez in \cite{PQV2014HS}. Since then, there has been a vast number of works on problems of this type, including \cite{david2021free, DavidPhenotypic, DebiecIncompressibleLimitOneD,DebiecIncompressibleLimitAnyD, degond2022multi, dou2020tumor, gomes2024hele, GKM2022HS, he2023incompressible, he2024incompressible, MPQ, kim2023incompressible, KP, KT, PV}. In particular,  there are several works on the incompressible limit in the presence of drift or advection: \cite{chu2022hele,david2021convective, KPW2019}. Of these, the results of \cite{chu2022hele}, which studies the incompressible ($k\rightarrow \infty$) limit of,
\[
\partial_t\rho_k = \nabla \cdot (\rho_k (\nabla p_k +\overrightarrow{b}))+f\rho_k, \quad p_k = \frac{k}{k-1}\left(\frac{\rho_k}{m}\right)^{k-1},
\]
where  $\overrightarrow{b}(x,t)$, $f(x,t)$ and $m(x,t)$ are given, is closest to ours. Indeed, the coefficient $m(x,t)$ in \cite{chu2022hele} exactly plays the role of the coefficient $b(x,t)$ in our work. However, we do not consider a drift term $\overrightarrow{b}$, and the  PDE studied in \cite{chu2022hele} doesn't include the coefficient $a$. Thus, our result does not imply the results in \cite{chu2022hele}, and vice-versa. We also mention  the   work \cite{turanova2025hele}, which establishes stochastic homogenization for a free boundary problem with heterogeneity both on the interior and on the free boundary, like \eqref{eq:limiting eq}.

\subsection{Outline}
In Section \ref{s:basic estimates}, we establish the basic bounds of Lemma \ref{lem:basic}. 
Section \ref{s:stronger} is devoted to the two stronger estimates on the pressure: a uniform bound on $\|\nabla p_m\|_{L^4(\R^d)}$ (Proposition \ref{Prop: L3 for Dp}) and an estimate of Aronson-B\'enilan type on the negative part of $\Delta p_m$ in $L^3$ (Proposition \ref{Prop: AB-est L3}). 
Finally, in Section  \ref{s:main pf}, we put everything together to prove Theorem \ref{Prop: main prop}.

 \subsection{Acknowledgements}
 Both authors acknowledge support by NSF DMS grant 2204722.

\section{Basic estimates}
\label{s:basic estimates}

In this section we establish basic estimates for solutions of \eqref{eqn: PME}. We denote 
 \begin{equation}
 \label{eq:bgamma}
  \bgamma = \nabla \log \left(b/a\right) \text{ and } \quad \tilde{\Phi}(x,t,p_m) = \Phi(x,t,p_m)-a \partial_t\log b.
   \end{equation}
For some calculations we  use the following equivalent formulation of (\ref{eqn: PME Pressure 2}):
\begin{align} \label{eqn: PME Pressure}
 \partial _t p_m  - 
\frac{\vert \nabla p_m \vert ^2}{a}
    &= (m-1) \frac{p_m}{a}\LP \Delta p_m +  \nabla p_m \cdot \bgamma     + \tilde\Phi (x,t,p_m) \RP.
\end{align}
Furthermore,  we   use the normalized density $v_m := \frac{u_m}{b}$, which solves
\begin{align} \label{eqn: normalized density}
   a\partial _t v_m =  \Delta v^m_m +\bgamma \cdot \nabla v^m_m + v_m \tilde{\Phi}(x,t, p_m).
   \end{align}

\begin{Lemma}[Basic estimates] 
\label{lem:basic}
  Suppose Assumptions \ref{Assump: a,b,Phi}, \ref{Assump: initial data conditions},  \ref{Assump: supersol construction}, and \ref{A:tildelambda}  hold and fix $T>0$. There exits a constant $C>0$, independent of $m$, such that:
  \begin{enumerate}[(i)]
  \item \label{item:Linfty and cpt supp} $0 \leq u_m, p_m, v_m  \leq C$ and $\supp (u_m(\cdot, t))\subset B_{Ct}$ for almost all $0\leq t\leq T$, 
    \item \label{item:L2 bd for Dp} $\Vert \nabla p_m \Vert _{L^1 (Q_T)}+  \Vert \nabla p_m \Vert _{L^2 (Q_T)} \leq C$,
    \item \label{item:u L1} $\Vert \partial_t u_m \Vert _{L^1 (Q_T)}  + \Vert \nabla u_m \Vert _{L^1 (Q_T)}\leq C$,
    \item \label{item:pt}
  $\Vert \partial_t p_m \Vert _{L^1 (Q_T)}\leq C$.
\end{enumerate}
\end{Lemma}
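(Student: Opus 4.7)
I would handle the four parts in the order (i), (ii), (iii)--(iv); each feeds into the next.

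\textbf{Part (i): barrier.} Build an $m$-independent classical supersolution $P(x,t)$ of the pressure equation~\eqref{eqn: PME Pressure 2} that dominates $p_m^0$ and has compact support expanding at a controlled rate. Since $p_m^0\le p_M$ and $\Phi(\cdot,\cdot,p_M)=0$ by Assumption~\ref{Assump: a,b,Phi}, a natural ansatz is of parabolic/self-similar type (for instance $P(x,t)=\alpha(t)(R(t)^2-|x|^2)_+$). The three alternatives of Assumption~\ref{Assump: supersol construction} are precisely what is needed to absorb the drift $\bgamma\cdot\nabla P$ appearing in~\eqref{eqn: PME Pressure 2}: either radial symmetry eliminates it (cases $d=1$ and~(\ref{item:radial})), or the decay of $\nabla(b/a)$ controls it at large $|x|$ (case~(\ref{item:growth})). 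After regularizing via Remark~\ref{remark: test function approx}, comparison yields $p_m\le P$ and hence the stated bounds on $p_m$, on $v_m=((m-1)p_m/m)^{1/(m-1)}$, and on $u_m=bv_m$, together with the support statement.

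\textbf{Part (ii): energy.} Integrate~\eqref{eqn: PME Pressure} in $x$ and integrate by parts the $\tfrac{p_m}{a}\Delta p_m$ and $\tfrac{p_m}{a}\bgamma\cdot\nabla p_m$ terms (all boundary terms vanish by~(i)). Rearranging,
\[
(m-2)\!\int\!\frac{|\nabla p_m|^2}{a}\,dx = -\frac{d}{dt}\!\int\! p_m\,dx + (m-1)\!\int\!\Bigl[-\tfrac{p_m^2}{2}\nabla\!\cdot\!\bigl(\tfrac{\nabla a}{a^2}+\tfrac{\bgamma}{a}\bigr) + \tfrac{p_m\tilde{\Phi}}{a}\Bigr]dx.
\]
Every right-hand quantity is bounded uniformly in $m$ by~(i) and the $C^2$ bounds on $a,b$. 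Integrating in $t\in[0,T]$ and dividing by $m-2$ yields $\|\nabla p_m\|_{L^2(Q_T)}\le C$; Cauchy--Schwarz together with the compact support from~(i) gives the $L^1$ bound.

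\textbf{Parts (iii)--(iv): Kato-type sign argument.} Working with the smooth approximations $u_{m,\epsilon}$ of Remark~\ref{remark: test function approx}, differentiate~\eqref{eqn: normalized density} in $t$ and set $V=\partial_tv_m$; one finds a parabolic equation of schematic form
\[
a\,\partial_tV = \Delta\bigl(mv_m^{m-1}V\bigr) + \bgamma\cdot\nabla\bigl(mv_m^{m-1}V\bigr) + \Bigl(\tilde{\Phi}+\tfrac{(m-1)p_m\Phi_p}{b}\Bigr)V + R_m,
\]
where $R_m$ collects remainder terms ($\partial_t\bgamma\cdot\nabla v_m^m$, contributions from $\partial_ta,\partial_tb,\Phi_t$, etc.) that are bounded in $L^1$ uniformly in $m$ by~(i)--(ii). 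Multiply by $\sgn(V)$, apply Kato's inequality, and integrate: the $\Delta$ term drops, the drift yields $-\!\int\!\Delta\log(b/a)\,mv_m^{m-1}|V|\,dx \le -(\lambda+\tilde{\lambda})\!\int\! mv_m^{m-1}|V|\,dx$ by Assumption~\ref{A:tildelambda}, and the reaction contributes $\int\!\tfrac{(m-1)p_m\Phi_p}{b}|V|\,dx \le -\tfrac{\lambda}{\Lambda}\!\int\! mv_m^{m-1}|V|\,dx$ since $(m-1)p_m=mv_m^{m-1}$. These cancellations absorb the otherwise problematic $O(m)$ terms, leaving $\tfrac{d}{dt}\!\int a|V|\,dx \le C\!\int a|V|\,dx + \|R_m\|_{L^1}$. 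Gronwall with $\|\partial_tu_m^0\|_{L^1}\le C$ (from~\eqref{eq: assump p0 bds} and the definition of $p_m^0$) yields $\|V\|_{L^1(Q_T)}\le C$, and $\partial_tu_m = bV + v_m\partial_tb$ gives the $\partial_tu_m$ portion of~(iii). The $\nabla u_m$ bound comes from running the same template with $\partial_{x_i}$ replacing $\partial_t$, where the control of $\|\partial_tu_m\|_{L^1}$ just obtained absorbs one remainder term coming from $\partial_{x_i}$ falling on the coefficients. Part~(iv) for $\partial_tp_m$ is obtained by the same scheme applied to~\eqref{eqn: PME Pressure} (with the dissipative $\Phi_p$ appearing directly, multiplied by $(m-1)p_m/a$).

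\textbf{Main obstacle.} The crux is the cancellation in (iii)--(iv): naively, differentiating the equation produces $O(m)$ reaction and drift contributions that are out of reach of~(i)--(ii). The Gronwall closure succeeds only because, after signing by $\sgn(V)$, those large terms have favorable signs, and because Assumption~\ref{A:tildelambda} provides a \emph{strict} lower bound $\lambda+\tilde\lambda>0$ on $\Delta\log(b/a)=\nabla\!\cdot\!\bgamma$---exactly the margin needed to dominate the $mv_m^{m-1}$ weight created when the drift is integrated by parts. Once the sign structure is identified, tracking the $O(1)$ terms in $R_m$ produced by the $(x,t)$-dependence of $a,b,\Phi$ is technically demanding but mechanical.
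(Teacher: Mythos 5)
Parts (i) and (ii) track the paper's proof: barrier construction along the lines of \cite{chu2022hele} for (i), and integration of the pressure equation for (ii) (your version integrates the drift term by parts twice rather than using Young's inequality, but the effect is the same). Your part (iii) also matches the paper's argument, including the essential observations that Assumption~\ref{A:tildelambda} together with $\partial_p\Phi\leq-\lambda$ makes the $O(m)$ Kato terms nonpositive, and that the time-derivative bound must come before the space-derivative bound so the latter can absorb the remainder involving $a_{x_i}\partial_t v_m$. (A small slip: the coefficient multiplying $|V|$ should be $(m-1)p_m\tilde\Phi_p=mv_m^{m-1}\tilde\Phi_p$, not $(m-1)p_m\Phi_p/b$, though the sign structure is unaffected.)

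Part (iv) is where your proposal diverges and, as written, does not close. Differentiating \eqref{eqn: PME Pressure} in $t$, multiplying by $\sgn(\partial_tp_m)$, and applying Kato produces, after integration, terms such as $(m-1)\iint\Delta(p_m/a)\,|\partial_tp_m|$ (from $(m-1)\tfrac{p_m}{a}\Delta\partial_tp_m$) and $(m-1)\iint\tfrac{|\partial_tp_m|}{a}\left(\Delta p_m+\cdots\right)$ (from the product rule on $\tfrac{p_m}{a}$). Both carry an $O(m)$ factor times $\Delta p_m\,|\partial_tp_m|$ with no sign, and no uniform $L^1$ control of $\Delta p_m$ is available yet --- that is exactly what Proposition~\ref{Prop: AB-est L3} establishes, and it comes after this lemma. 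The single dissipative contribution $(m-1)\tfrac{p_m}{a}\tilde\Phi_p|\partial_tp_m|\leq-(m-1)\lambda\tfrac{p_m}{a}|\partial_tp_m|$ does not dominate these. The paper avoids differentiating the pressure equation at all: it retains, rather than discards, the term $-\tilde\lambda\iint v_m|\partial_tp_m|$ in the Gronwall step of (iii) to obtain $\iint v_m|\partial_tp_m|\leq C$, and then decomposes
\[
\iint|\partial_tp_m|=\iint_{\{v_m<1/2\}}mv_m^{m-2}|\partial_tv_m|+\iint_{\{v_m\geq1/2\}}|\partial_tp_m|\leq C\iint|\partial_tv_m|+2\iint v_m|\partial_tp_m|,
\]
using that $mv_m^{m-2}\leq m\,2^{2-m}$ is bounded uniformly for $m\geq2$ on $\{v_m<1/2\}$, while $1\leq2v_m$ on $\{v_m\geq1/2\}$. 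You should replace the part (iv) step of your plan with this (or an equivalent) device.
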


\begin{proof}
We omit writing the subscript $m$ during this proof. Further, we usually abbreviate $\Phi(x,t,p)$ as $\Phi(p)$, and similarly for $\tilde\Phi$.

A small modification of the construction of  \cite{chu2022hele}*{Lemmas 8.1 - 8.3}, namely, replacing the coefficient $m(x,t)$ of \cite{chu2022hele} with $a(x,t)/b(x,t)$, provides a supersolution $Z(x,t)$ of \eqref{eqn: PME Pressure 2}  that satisfies $Z(x,0)\geq p^0(x)$. The comparison principle thus ensures $p(x,t)\leq Z(x,t)$ for all $t>0$. By construction, $Z(x,t)$ is bounded and compactly supported in $x$ for all $t$; therefore, so is $p$. The desired bounds in (\ref{item:Linfty and cpt supp}) on $u$, $v$, and $p$ follow directly.

To establish (\ref{item:L2 bd for Dp}), we begin by integrating \eqref{eqn: PME Pressure} on $Q_T$ to obtain,
\begin{equation}
\label{eq:pt on QT}
\iint_{Q_T} p_t = \iint_{Q_T} \frac{\vert \nabla p \vert ^2}{a}
    + (m-1) \iint_{Q_T} \frac{p}{a} \Delta p +   (m-1)\iint_{Q_T}  \frac{p}{a}   \nabla p \cdot \bgamma     +   (m-1)\iint_{Q_T}  \frac{p}{a}  \tilde\Phi (p).
\end{equation}
Integrating by parts, rearranging the terms, and then using Young's inequality, yields,
\begin{align*}
\iint_{Q_T} \frac{p}{a} \Delta p +   \iint_{Q_T}   \frac{p}{a} \nabla p \cdot \bgamma  & =
-\iint_{Q_T} \nabla\left(\frac{p}{a}\right)\cdot \nabla  p +   \iint_{Q_T}   \frac{p}{a} \nabla p \cdot \bgamma \\
&=
-\iint_{Q_T} \frac{|\nabla p|^2}{a} -  \iint_{Q_T}p\nabla\left(\frac{1}{a}\right)\cdot \nabla p+ \iint_{Q_T}   \frac{p}{a} \nabla p \cdot \bgamma \\
& = -\iint_{Q_T} \frac{|\nabla p|^2}{a} + \iint_{Q_T} \frac{\nabla p}{a}\cdot \left( -p \frac{\nabla a}{a} +p \bgamma\right)\\
&\leq -\iint_{Q_T} \frac{|\nabla p|^2}{a} +\frac{1}{2}  \iint_{Q_T}\frac{|\nabla p|^2}{a} + \frac{1}{2}\iint_{Q_T}\frac{p^2 |-\sfrac{\nabla a}{a} +\bgamma |^2}{a}\\
&\leq  -\frac{1}{2}\iint_{Q_T} \frac{|\nabla p|^2}{a}+C,
\end{align*}
where the final inequality follows from  item (\ref{item:Linfty and cpt supp}) of this lemma and Assumption \ref{Assump: a,b,Phi}. Now, using this estimate to bound the righthand side of \eqref{eq:pt on QT} from above yields,
\[
\int_{\R^2} p(x,T)\, dt - \int_{\R^2} p(x,0)\, dt \leq  \iint_{Q_T} \frac{\vert \nabla p \vert ^2}{a}
    -  \frac{m-1}{2}\iint_{Q_T} \frac{|\nabla p|^2}{a}+(m-1)C   +   (m-1)\iint_{Q_T}  \frac{p}{a}\tilde\Phi (p),
\]
which upon rearranging becomes,
\begin{align*}
 \frac{m-3}{2}\iint_{Q_T} \frac{|\nabla p|^2}{a}&\leq (m-1)C +   (m-1)\iint_{Q_T}  \frac{p}{a}\tilde\Phi (p) - \int_{\R^2} p(x,0)\, dt\\
 &\leq (m-1)C+C,
 \end{align*}
where the final inequality follows from item (\ref{item:Linfty and cpt supp}) of this lemma, Assumption \ref{Assump: a,b,Phi}, and Assumption \ref{Assump: initial data conditions}. Dividing by $(m-3)$ and using that $a$ is uniformly bounded from above yields the desired estimate on $\|\nabla p\|_{L^2(\R^d)}$. The compact support of $p$ along with H\"older's inequality then gives the desired uniform $L^1$ bound on $\nabla p$ as well.

Before proceeding, we note that the definition of $v$ and  item (\ref{item:Linfty and cpt supp}) imply,
    \[ 
    |\nabla v_m^m | =  v_m |\nabla p_m | \leq C|\nabla p_m |,
    \]
    which, upon integrating over $Q_T$ and applying item (\ref{item:L2 bd for Dp}), yields
\begin{equation}
\label{eq:L1 L2 bound on v_m}     
\Vert \nabla v_m ^m\Vert _{L^1 (Q_T)}
    + \Vert \nabla v_m ^m\Vert _{L^2 (Q_T)} \leq C.
    \end{equation}

Next we will show that the space and time derivatives of the normalized density are bounded in $L^1$, and deduce (\ref{item:u L1}) from there.  The argument relies on  the fact that $\Phi$, and therefore $\tilde{\Phi}$, is strictly decreasing in $p$, as well as on Assumption \ref{A:tildelambda}. 

We differentiate \eqref{eqn: normalized density} in $t$ to obtain,
\begin{align*}
(av_t)_t &= \Delta(v^m)_t + \bgamma \cdot \nabla (v^m)_t + \bgamma_t\cdot \nabla v^m + v_t \tilde\Phi +v\left(\tPhi_t(p)+\tPhi_p(p)p_t\right)\\
&= \Delta(mv^{m-1}v_t) +\nabla \cdot (\bgamma m v^{m-1}v_t) - (\nabla\cdot \bgamma)(v p_t) +\bgamma_t\cdot \nabla v^m + v_t \tilde\Phi +v\left(\tilde\Phi_t(p)+\tilde\Phi_p(p)p_t\right),
\end{align*}
where we've used the equality $(v^m)_t = v p_t$. Next we multiply by $\sgn(v_t)=\sgn(p_t)$ and apply Kato's inequality to find,
\begin{align*}
(a|v_t|)_t &\leq \Delta(mv^{m-1}|v_t|) +\nabla \cdot (\bgamma m v^{m-1}|v_t|) - (\nabla\cdot \bgamma)(v |p_t|) +
\\&\quad +\bgamma_t\cdot \nabla v^m\sgn(v_t) + |v_t |\tilde\Phi +v\tilde\Phi_t(p)\sgn(v_t)+v\tilde\Phi_p(p)|p_t|.
\end{align*}
It is at this point that we use Assumption \ref{A:tildelambda} to  show that the sum of the two terms involving $v|p_t|$ is nonpositive. Indeed, upon also recalling the definition of $\bgamma$, we have, 
\begin{align*}
-(\nabla\cdot \bgamma)(v |p_t|)+v\tilde\Phi_p(p)|p_t| & = \left(-\Delta \log\left(b/a\right)+\tilde\Phi_p(p)\right) v|p_t|
\\ &\leq \left(-\Delta \log\left(b/a\right)-\lambda\right)v|p_t| \leq -\tilde\lambda v|p_t|,
\end{align*}
where we used Assumption \ref{Assump: a,b,Phi} to obtain the first inequality, and Assumption \ref{A:tildelambda} to obtain the second. Now we use this in the estimate on $(a|v_t|)_t$ and integrate over $Q_T$. Since $v$ is compactly supported, the first two term vanish. Thus we find,
\begin{align}
 \nonumber
\frac{d}{dt}\iint_{Q_T}a|v_t| &\leq \iint_{Q_T}\bgamma_t\cdot \nabla v^m\sgn(v_t) + \iint_{Q_T}|v_t |\tilde\Phi +\iint_{Q_T}v\tilde\Phi_t(p)\sgn(v_t)  -\tilde\lambda\iint_{Q_T} v|p_t|
\\&\label{eq:ddt vt} \leq C+ C \iint_{Q_T}a|v_t |  -\tilde\lambda\iint_{Q_T} v|p_t|
\\&\leq C+ C \iint_{Q_T}a|v_t | , \nonumber
\end{align}
where to obtain the second inequality we've used \eqref{eq:L1 L2 bound on v_m}, item (\ref{item:Linfty and cpt supp}), and Assumption \ref{Assump: a,b,Phi}.  Gronwall's inequality and Assumption  \ref{Assump: initial data conditions} therefore implies,
\begin{equation}
\label{eq:bd vt}
 \iint_{Q_T}a|v_t| \leq C.
\end{equation}

We will now use \eqref{eq:bd vt} to bound the $L^1$ norm of the spacial derivative of $v$. 
To this end, we  differentiate \eqref{eqn: normalized density} in $x_i$, for some $i\in \{1,.., d\}$, to obtain (after some standard manipulations),
\begin{align*}
\partial_t (av_{x_i}) &=  a \partial_t v_{x_i}+ \partial_t a v_{x_i} = (a \partial_t v)_{x_i}- a_{x_i}\partial_t  v+ \partial_t a v_{x_i}\\
&=\Delta (v^m)_{x_i} +\bgamma \cdot \nabla (v^m)_{x_i} + \bgamma_{x_i}\cdot\nabla v^m + v_{x_i}\tilde\Phi(p) + v \tilde\Phi(p)_{x_i}- a_{x_i}\partial_t  v+ \partial_t a v_{x_i}.
\end{align*}
Performing further standard manipulations on the first two terms yields,
\begin{align*}
\partial_t (av_{x_i}) &=\Delta (m v^{m-1} v_{x_i}) + \nabla\cdot (\bgamma m v^{m-1} v_{x_i}) - (\nabla \cdot \bgamma)(v^m)_{x_i} +\\ &\quad + \bgamma_{x_i}\cdot\nabla v^m +  v_{x_i}\tilde\Phi(p) + v \tilde\Phi(p)_{x_i}- a_{x_i}\partial_t  v+ \partial_t a v_{x_i}.
\end{align*}
Next, we multiply by $\sgn(v_{x_i})$ and use Kato's inequality  to find,
\begin{align*}
\partial_t (a|v_{x_i}|) &\leq \Delta (m v^{m-1} |v_{x_i}|) + \nabla\cdot (\bgamma m v^{m-1} |v_{x_i}|) - (\nabla \cdot \bgamma)(v^m)_{x_i}\sgn(v_{x_i}) +\\
&\quad+ \bgamma_{x_i}\cdot \nabla v^m\sgn(v_{x_i}) +  |v_{x_i}|\tilde\Phi(p) + v \tilde\Phi(p)_{x_i}\sgn(v_{x_i})- a_{x_i}\partial_t  v\ \sgn(v_{x_i})+ \partial_t a |v_{x_i}|.
\end{align*}
Now we integrate over $Q_T$. Note that, since $v$ is compactly supported, the first two terms on the righthand side disappear, so we obtain, 
\begin{align*}
\iint_{Q_T}\partial_t (a|v_{x_i}|) &\leq \iint_{Q_T} - (\nabla \cdot \bgamma)(v^m)_{x_i}\sgn(v_{x_i}) + \iint_{Q_T}\bgamma_{x_i}\cdot \nabla v^m\sgn(v_{x_i}) +\iint_{Q_T}  |v_{x_i}|\tilde\Phi(p)  \\&\quad +\iint_{Q_T}   v \tilde\Phi(p)_{x_i}\sgn(v_{x_i})- \iint_{Q_T}a_{x_i}\partial_t  v\ \sgn(v_{x_i})+ \iint_{Q_T}\partial_t a |v_{x_i}|.
\end{align*}
Our assumptions on the coefficients, together with the estimates \eqref{eq:L1 L2 bound on v_m}  and \eqref{eq:bd vt}, imply that the first two terms, as well as the fourth term, on the righthand side are bounded from above, uniformly in $m$. Thus we have,
\begin{align*}
\frac{d}{dt}\iint_{Q_T}a|v_{x_i}| &\leq C+ C\iint_{Q_T}  a|v_{x_i}|+ C\iint_{Q_T}|\partial_t  v|\leq C+ C\iint_{Q_T}  a|v_{x_i}|,
\end{align*}
where to obtain the first inequality we've  used Assumption \ref{Assump: a,b,Phi}, and we've used \eqref{eq:bd vt} to obtain the second inequality. Applying Gronwall's inequality and using Assumption \ref{Assump: initial data conditions} yields 
\[
\iint_{Q_T}a|v_{x_i}| \leq C.
\]
The  estimates in (\ref{item:u L1})  follow directly from the previous line, the estimate \eqref{eq:bd vt}, the definition of $v$, and Assumption \ref{Assump: a,b,Phi}.

Finally, we move on to the proof of (\ref{item:pt}). We note that the inequality \eqref{eq:ddt vt} implies,
\begin{align*}
\int_{\R^d}a|v_t(x,T)|\, dx - \int_{\R^d}a|v_t(x,0)|\, dx + \tilde\lambda\iint_{Q_T} v|p_t| \leq & C+ C \iint_{Q_T}a|v_t |.
\end{align*}
Rearranging, and then using  (\ref{item:u L1}) and Assumption \ref{Assump: initial data conditions} yields,
\begin{equation}
\label{eq:vpt}
 \tilde\lambda\iint_{Q_T} v|p_t|\leq  C+ C \iint_{Q_T}a|v_t |+     \int_{\R^d}a|v_t(x,0)|\, dx\leq C.
\end{equation}
The definitions of $v$ and $p$ imply $p_t = m v^{m-2} v_t$; therefore,
\begin{align*}
\iint_{Q_T} |p_t| &= \iint_{Q_T\cap \{v<1/2\}} mv^{m-2}|v_t| + \iint_{Q_T\cap \{v\geq 1/2\}}  |p_t|\\
&\leq C\iint_{Q_T\cap \{v<1/2\}} |v_t| + 2\iint_{Q_T\cap \{v\geq 1/2\}} v |p_t|.
\end{align*}
The desired estimate thus follows directly from \eqref{eq:bd vt} and \eqref{eq:vpt}, completing the proof.

\end{proof}

 \section{Stronger estimates on the pressure}
\label{s:stronger}

Throughout this section, we denote  $\Omega_T=B_{CT} \times (0,T)$, where $C>0$ is large enough so that the support of $p(\cdot, t)$ is contained in $B_{CT}$ for almost every  $t \in [0,T]$. The existence of such a $C$  is guaranteed by  Lemma \ref{lem:basic}(\ref{item:Linfty and cpt supp}).

\subsection{$L^3$ bound for the gradient of the pressure}
 Let us denote $\bar\Phi(x,t,p_m)= \frac{b}{a}\Phi (x,t,p_m) -\partial_tb$ and define
\[
\omega_m:= \nabla \cdot\left(\frac{b}{a}\nabla p_m\right) 
    + \bar\Phi(p_m).
\]
Notice that  the equation \eqref{eqn: PME Pressure 2} satisfied by $p_m$ may now be written as,
\begin{equation}
\label{eq:pomega}
\partial _t p_m  - 
    \frac{\vert \nabla p_m \vert ^2}{a} 
 =(m-1) \frac{p_m}{b}\omega_m.
\end{equation}
\begin{Prop}[$L^4$ bound for $\nabla p_m$] \label{Prop: L3 for Dp}
Suppose Assumptions \ref{Assump: a,b,Phi}, \ref{Assump: initial data conditions},  \ref{Assump: supersol construction}, and \ref{A:tildelambda}  hold and fix $T>0$. Let $T>0$. There exist positive constants $C_1$,  $C_2$, and $C_3$, independent of $m$, such that
\begin{align}
\label{eq:PropDp}
\frac13\iint_{\Omega_T} p_m\frac{b}{a^2}\sum_{i,j=1}^d \left|\frac{\partial^2p_{m}}{\partial x_i\partial x_j}\right|^2+  (m-C_1) \iint_{\Omega_T}\frac{p_m}{b}\omega_m^2&\leq C_2, \text{ and }\\
    \iint _{\Omega _T} \vert \nabla p_m \vert ^4 &\leq C_3. \label{eq:L4bd}
\end{align}
\end{Prop}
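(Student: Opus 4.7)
The plan is to multiply equation \eqref{eq:pomega} by $\omega_m$ and integrate over $\Omega_T$, yielding
\[
(m-1)\iint_{\Omega_T} \frac{p_m}{b}\omega_m^2 = \iint_{\Omega_T} \omega_m \partial_t p_m - \iint_{\Omega_T} \omega_m \frac{|\nabla p_m|^2}{a}.
\]
The first right-hand integral is controlled by a universal constant $C$: write $\omega_m = \nabla\cdot(\tfrac{b}{a}\nabla p_m) + \bar\Phi$, integrate by parts in space (using the compact support from Lemma \ref{lem:basic}(\ref{item:Linfty and cpt supp})) to convert $\iint\nabla\cdot(\tfrac{b}{a}\nabla p_m)\partial_t p_m$ into $-\tfrac12 \iint\tfrac{b}{a}\partial_t|\nabla p_m|^2$, then integrate by parts in time; the $t=T$ boundary term has a favorable sign, while the $t=0$ term and the remaining interior term are controlled via \eqref{eq: assump p0 bds} and Lemma \ref{lem:basic}(\ref{item:L2 bd for Dp}). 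The $\bar\Phi\,\partial_t p_m$ piece is handled by Lemma \ref{lem:basic}(\ref{item:pt}).

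The heart of the argument is to show that $-\iint\omega_m |\nabla p_m|^2/a$ contributes $\tfrac13\iint p_m \tfrac{b}{a^2}|D^2 p_m|^2$ together with a further multiple of $\iint p_m\omega_m^2/b$ that gets absorbed into the left side. Moving the divergence off $\omega_m$ yields $\iint \tfrac{b}{a^2}\nabla p_m\cdot \nabla|\nabla p_m|^2 + O(\iint|\nabla p_m|^3)$. The Hessian-squared term is to be produced by the direct integration by parts
\[
\iint p_m\tfrac{b}{a^2}|D^2 p_m|^2 = -\iint \nabla(p_m\tfrac{b}{a^2})\cdot D^2 p_m\,\nabla p_m - \iint p_m\tfrac{b}{a^2}\nabla p_m\cdot\nabla\Delta p_m,
\]
where one more integration by parts on the second term and the substitution $\Delta p_m = \tfrac{a}{b}\omega_m - (\text{l.o.t.})$ from \eqref{eq:pomega} produce $\iint p_m\tfrac{b}{a^2}(\Delta p_m)^2 = \iint p_m\omega_m^2/b + (\text{l.o.t.})$, linking the two main quantities in \eqref{eq:PropDp}. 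The cross terms $\iint p_m|\nabla p_m|^2|\omega_m|$ are controlled via Cauchy--Schwarz by $(\iint p_m\omega_m^2/b)^{1/2}(\iint p_m|\nabla p_m|^4)^{1/2}$ and absorbed into a bounded multiple of $(m-1)\iint p_m\omega_m^2/b$, which accounts for the shift $m\to m-C_1$. Cubic gradient terms are handled by $\iint|\nabla p_m|^3 \leq \epsilon \iint|\nabla p_m|^4+C_\epsilon$, with $\iint|\nabla p_m|^4$ absorbed using \eqref{eq:L4bd} proven in tandem.

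For \eqref{eq:L4bd}, the compact support of $p_m$ lets us integrate by parts:
\[
\iint_{\Omega_T} |\nabla p_m|^4 = -2\iint p_m (\nabla p_m)^T D^2 p_m\,\nabla p_m - \iint p_m |\nabla p_m|^2 \Delta p_m.
\]
Cauchy--Schwarz bounds the first right-hand term by $C(\iint p_m |D^2 p_m|^2)^{1/2}(\iint|\nabla p_m|^4)^{1/2}$, using $p_m\leq C$. For the second, substitute $\Delta p_m = \tfrac{a}{b}\omega_m + (\text{l.o.t.})$ and use the same strategy, obtaining a bound in terms of $(\iint p_m\omega_m^2/b)^{1/2}(\iint |\nabla p_m|^4)^{1/2}$. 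Applying \eqref{eq:PropDp} to conclude $\iint p_m |D^2 p_m|^2\leq C$ and $\iint p_m\omega_m^2/b\leq C$ gives the inequality $X \leq C\sqrt{X}+C$ for $X=\iint |\nabla p_m|^4$, hence $X\leq C_3$.

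The main obstacle is the identification in the second paragraph of the Hessian-squared term with the correct coefficient $\tfrac13$: naive integration by parts on $-\iint\omega_m|\nabla p_m|^2/a$ is tautological (it returns to itself after two integrations by parts and Bochner-type manipulations), and the asymmetry required to extract $\iint p_m\tfrac{b}{a^2}|D^2 p_m|^2$ must be introduced by substituting the equation \eqref{eq:pomega} to eliminate $\Delta p_m$ in favor of $\omega_m$ at the right moment, with the algebraic book-keeping determining both the constant $\tfrac13$ and the absorption constant $C_1$. A secondary obstacle is that \eqref{eq:PropDp} and \eqref{eq:L4bd} cannot be proved fully independently: the cubic gradient terms require the $L^4$ bound to close, so the estimates are coupled and must be established together.
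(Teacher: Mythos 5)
Your proposal follows essentially the same route as the paper: multiply \eqref{eq:pomega} by $\omega_m$, integrate, handle $\iint p_t\omega_m$ by integration by parts in space then time, and control $-\iint \omega_m|\nabla p_m|^2/a$ by a Bochner-type chain of integrations by parts that produces the weighted Hessian integral, with the $(\Delta p_m)^2$ piece converted to $\omega_m^2$ via \eqref{eq:w Del p} (this is exactly the content of the paper's Lemma \ref{lem:L3 bd} and \eqref{eq:WTS}). The algebraic bookkeeping you sketch reproduces the factor $\tfrac23$, which degrades to $\tfrac13$ after absorption, and the shift to $m-C_1$ comes from exactly the substitution you describe.

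Where you go astray is the claimed ``coupling'' between \eqref{eq:PropDp} and \eqref{eq:L4bd}. There is no need to prove the two estimates ``in tandem.'' The paper invokes a separate, self-contained integration-by-parts inequality (Lemma \ref{lem:chu}): for any $\phi\in C_c^\infty(\R^d)$,
\[
\int_{\R^d}|\nabla\phi|^4 \le 2\int_{\R^d}\phi^2|\Delta\phi|^2 + 8\sum_{i,j}\int_{\R^d}\phi^2|\phi_{x_ix_j}|^2,
\]
which bounds $\iint|\nabla p_m|^4$ \emph{pointwise in $m$} by the very $p_m^2$-weighted Hessian and Laplacian integrals that sit on the left side of \eqref{eq:PropDp} (after $p_m\le C$ and $\Lambda^{-1}\le a,b\le\Lambda$). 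This allows the $\ep\iint|\nabla p_m|^4$ term arising from Young to be absorbed into the Hessian and $\omega_m^2$ terms \emph{before} either conclusion is known, so \eqref{eq:PropDp} is proved first and \eqref{eq:L4bd} then drops out immediately from the same inequality. Your Cauchy--Schwarz route to $X\le C\sqrt{X}\sqrt{A+B}+C$ (hence $X\le C(A+B)+C$) is equivalent after squaring, so the argument does close; but stating it as a bootstrap where ``\eqref{eq:L4bd} is proven in tandem'' both obscures the logic and, taken literally (i.e.\ using the fixed constant $C_3$ from \eqref{eq:L4bd} inside the proof of \eqref{eq:PropDp}), would be circular. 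Replace the tandem framing with the explicit inequality $\iint|\nabla p_m|^4\le C(\iint p_m|D^2 p_m|^2 + \iint p_m\omega_m^2) + C$ derived from your own integration by parts, plug it into $\ep\iint|\nabla p_m|^4$, and the absorption is clean.
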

Analogous bounds were established in  \cite[Lemma 5.3] {chu2022hele} (in the presence drift, and with $a\equiv 1$) and in   \cite[Lemma 3.2]{david2021convective} (in the presence of advection, and with $a\equiv b\equiv 1$).  
The proof of Proposition \ref{Prop: L3 for Dp} is inspired by the proofs of these two results.

We will employ the following lemma. Its proof exactly follows that of \cite[Lemma 5.1]{chu2022hele} in the special case of, in the notation of \cite{chu2022hele}, $m\equiv 1$. The same argument can also be found at the conclusion of the proof of Lemma 3.2 in \cite{david2021convective}. Thus, we omit writing the proof of the lemma here.

\begin{Lemma}
\label{lem:chu}
For any $\phi\in C^\infty_c (\R^d)$, 
 \begin{align*}
\int_{\R^d}|\nabla \phi |^4& \leq
 2\int_{\R^d} |\phi|^2|\Delta \phi|^2 + 8\sum_{i,j=1}^d \int_{\R^d} |\phi|^2 |\phi_{x_i, x_j}|^2.
 \end{align*}
\end{Lemma}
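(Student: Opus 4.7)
The inequality as stated contains what appears to be a typographical inconsistency: with $\phi$ the compactly supported function, every occurrence of $p$ on the right-hand side should read $\phi$ (otherwise the two sides refer to unrelated functions). I will sketch a proof of the natural reading
\[
\int_{\R^d}|\nabla\phi|^4 \le 2\int_{\R^d}|\phi|^2|\Delta\phi|^2 + 8\sum_{i,j=1}^d\int_{\R^d}|\phi|^2|\phi_{x_ix_j}|^2,
\]
which is exactly the pointwise/Bochner-type identity needed in the proof of Proposition \ref{Prop: L3 for Dp}; this reading also matches the analogous lemma in \cite{chu2022hele} when their coefficient is $1$.

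The plan is pure integration by parts on $\R^d$, using compact support to discard all boundary terms. First I would write
\[
\int_{\R^d}|\nabla\phi|^4 \;=\; \sum_{j=1}^d\int_{\R^d} |\nabla\phi|^2\, \phi_{x_j}\,\phi_{x_j},
\]
and integrate by parts in $x_j$ on the final factor $\phi_{x_j}$, moving the derivative onto $|\nabla\phi|^2\,\phi_{x_j}$ and introducing a factor of $\phi$ with a sign. Expanding $\partial_{x_j}|\nabla\phi|^2 = 2\sum_i \phi_{x_i}\phi_{x_ix_j}$ and separating off the $\phi_{x_jx_j}$ contribution (which assembles into a Laplacian) yields the key identity
\[
\int_{\R^d}|\nabla\phi|^4 \;=\; -2\int_{\R^d}\phi\sum_{i,j}\phi_{x_i}\phi_{x_j}\phi_{x_ix_j} \;-\; \int_{\R^d}\phi\,|\nabla\phi|^2\,\Delta\phi.
\]

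The next step is pointwise estimation of the two resulting terms. For the cubic Hessian term, Cauchy--Schwarz on the double sum gives $\bigl|\sum_{i,j}\phi_{x_i}\phi_{x_j}\phi_{x_ix_j}\bigr|\le |\nabla\phi|^2\bigl(\sum_{i,j}|\phi_{x_ix_j}|^2\bigr)^{1/2}$. After taking absolute values I apply Young's inequality with a carefully chosen weight $\alpha=\tfrac14$: writing $2|\phi||\nabla\phi|^2|D^2\phi|_F \le \tfrac14|\nabla\phi|^4 + 4|\phi|^2|D^2\phi|_F^2$ for the first term, and $|\phi||\nabla\phi|^2|\Delta\phi|\le \tfrac14|\nabla\phi|^4 + |\phi|^2|\Delta\phi|^2$ for the second. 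Summing and absorbing the total $\tfrac12\int|\nabla\phi|^4$ into the left-hand side produces exactly the constants $2$ and $8$ in the statement.

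The only delicate point is the bookkeeping of constants in the Young's inequality step; the specific choice $\alpha=\tfrac14$ is forced by the requirement to leave a factor of $\tfrac12$ on the left after absorption while producing the stated coefficients on the right. No analytical obstacle arises beyond this, since compact support of $\phi$ makes all integrations by parts rigorous without any cutoff arguments.
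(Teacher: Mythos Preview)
Your proof is correct, including the identification of the typo ($p$ should be $\phi$) and the exact bookkeeping of constants via the integration-by-parts identity followed by Young's inequality with weight $\tfrac14$. The paper does not supply its own proof of this lemma, instead citing \cite[Lemma 5.1]{chu2022hele} and the end of \cite[Lemma 3.2]{david2021convective}; your argument is precisely the standard computation those references carry out in the case of trivial coefficient.
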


For the remainder of this subsection, we will  drop the subscript $m$, and unless otherwise noted, all integrals are taken over $\Omega_T$.

The proof of Proposition \ref{Prop: L3 for Dp} proceeds by multiplying the  equation \eqref{eqn: PME Pressure 2} satisfied by $p$ by $\omega$ and performing a variety of manipulations. For the sake of organization, we isolate a particular estimate into the following lemma. 
\begin{Lemma}
\label{lem:L3 bd}
Under the assumptions of Proposition \ref{Prop: L3 for Dp}, we have
\begin{align}
\label{eq:lem L3}
\iint \frac{b}{a^2}|\nabla p|^2 \Delta p  \geq & - C\iint p \frac{b}{a^2}|\Delta p|^2 + \frac23\iint p\frac{b}{a^2}\sum_{i,j=1}^d |p_{x_i,x_j}|^2-\frac43\iint \nabla p \cdot \nabla\left(\frac{b}{a^2}\right)|\nabla p|^2-C.
\end{align}
\end{Lemma}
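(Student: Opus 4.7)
The plan is to derive an integrated identity from the Bochner--Weitzenb\"ock formula and then bound the resulting error terms using the uniform bounds on $a$, $b$ together with the $L^2$ control of $\nabla p$ from Lemma \ref{lem:basic}(\ref{item:L2 bd for Dp}). For brevity, set $A := b/a^2$: by Assumption \ref{Assump: a,b,Phi} this function is bounded above, bounded below away from zero, and has bounded derivatives of order one and two. Write $I := \iint_{\Omega_T} A|\nabla p|^2\Delta p$ for the quantity to be bounded from below.

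Starting from the pointwise identity
\[
\Delta(|\nabla p|^2) = 2\sum_{i,j=1}^d p_{x_ix_j}^2 + 2\nabla p\cdot \nabla\Delta p,
\]
I multiply by $pA$ and integrate over $\Omega_T$. Since $p(\cdot,t)$ is compactly supported in $B_{CT}$ by Lemma \ref{lem:basic}(\ref{item:Linfty and cpt supp}), all spatial integrations by parts produce no boundary contributions, and the formal manipulations are justified by the approximation scheme of Remark \ref{remark: test function approx}. On the left, integration by parts twice converts $\iint_{\Omega_T} pA\,\Delta(|\nabla p|^2)$ into $\iint_{\Omega_T}|\nabla p|^2\Delta(pA)$; expanding $\Delta(pA) = p\Delta A + 2\nabla A\cdot\nabla p + A\Delta p$ exhibits $I$ as one of three summands. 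On the right, integration by parts applied to $\iint_{\Omega_T} pA\,\nabla p\cdot\nabla\Delta p$ yields $-I - \iint_{\Omega_T} p(\nabla A\cdot\nabla p)\Delta p - \iint_{\Omega_T} pA|\Delta p|^2$. Collecting like terms produces
\[
3I = 2\iint_{\Omega_T} pA\sum_{i,j=1}^d p_{x_ix_j}^2 - 2\iint_{\Omega_T} pA|\Delta p|^2 + E,
\]
where $E := -\iint_{\Omega_T} p\Delta A|\nabla p|^2 - 2\iint_{\Omega_T}\nabla A\cdot\nabla p\,|\nabla p|^2 - 2\iint_{\Omega_T} p(\nabla A\cdot \nabla p)\Delta p$.

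Two of the three error pieces in $E$ are immediate: the first is bounded in absolute value by $C\iint_{\Omega_T}|\nabla p|^2\leq C$ via Lemma \ref{lem:basic}(\ref{item:L2 bd for Dp}), and the third is controlled by Young's inequality by $\tfrac12\iint_{\Omega_T} pA|\Delta p|^2 + C$. The main obstacle is the cubic term $\iint_{\Omega_T}\nabla A\cdot\nabla p\,|\nabla p|^2$, for which a direct bound via $\|\nabla p\|_{L^3}$ is both unavailable and circular, since Proposition \ref{Prop: L3 for Dp} is the goal. To handle it, I perform one further integration by parts in space, moving the derivative off $p_{x_i}$, which yields
\[
\iint_{\Omega_T}\nabla A\cdot\nabla p\,|\nabla p|^2 = -\iint_{\Omega_T} p\Delta A|\nabla p|^2 - 2\iint_{\Omega_T} p\,\langle\nabla A, D^2p\,\nabla p\rangle,
\]
converting the cubic integral into two $p$-weighted expressions. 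The first is again bounded by $C$, and the second is handled by Young's inequality in the form
\[
2\iint_{\Omega_T} p|\nabla A||D^2p||\nabla p|\leq \eta\iint_{\Omega_T} pA\sum_{i,j=1}^d p_{x_ix_j}^2 + \frac{C}{\eta}\iint_{\Omega_T} p|\nabla p|^2 \leq \eta\iint_{\Omega_T} pA\sum_{i,j=1}^d p_{x_ix_j}^2 + C.
\]
Choosing $\eta$ small enough so that the $D^2p$ piece can be absorbed into the dominant term on the right of the identity, and then dividing by $3$ and collecting constants, gives \eqref{eq:lem L3}.
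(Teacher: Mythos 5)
Your approach is correct and follows essentially the same skeleton as the paper's: the Bochner identity $\Delta|\nabla p|^2 = 2\sum_{i,j}p_{x_ix_j}^2 + 2\nabla p\cdot\nabla\Delta p$, two integrations by parts, and solving an inequality for $I$. You set up the identity before integrating by parts, whereas the paper integrates by parts first (writing $I = \iint p\,\Delta(A|\nabla p|^2)$) and only then invokes the Bochner identity; after the algebra both routes reach the relation $3I = 2\iint pA\sum_{i,j}p_{x_ix_j}^2 - 2\iint pA|\Delta p|^2 + E$ with the same cross terms in $E$. Where you are more careful than the paper is the cubic cross term $\iint\nabla A\cdot\nabla p\,|\nabla p|^2$: you correctly observe that it is not controlled by the $L^2$ bound on $\nabla p$ alone (and that appealing to the $L^3$/$L^4$ estimate would be circular), and your further integration by parts --- converting it to $-\iint p\,\Delta A\,|\nabla p|^2 - 2\iint p\,\langle\nabla A, D^2p\,\nabla p\rangle$ and then Young-absorbing the $D^2p$ factor into $\iint pA\sum p_{x_ix_j}^2$ --- is exactly what is needed. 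The paper's write-up of this step is terse: it quotes the auxiliary identity $\iint\nabla A\cdot\nabla(|\nabla p|^2)=-\iint\Delta A\,|\nabla p|^2$ and asserts both terms are bounded by $C$, but the integrals actually present in $I$ carry the weight $p$, and integrating by parts with that weight in place produces precisely the cubic term you isolate. Your argument makes explicit what the paper leaves implicit.

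One bookkeeping point: the Young absorption at weight $\eta$ removes $2\eta\iint pA\sum p_{x_ix_j}^2$ from the coefficient $2$, so after dividing by $3$ you obtain a coefficient $\frac{2-2\eta}{3}$, which is strictly smaller than the $\frac23$ in \eqref{eq:lem L3} for every $\eta>0$. This is harmless in the proof of Proposition \ref{Prop: L3 for Dp}, where any fixed positive $m$-independent coefficient suffices (one just shrinks the $\ep$ there correspondingly), but your final sentence claims the exact constant $\frac23$, which the argument as written does not deliver. Either restate the conclusion with a slightly smaller fixed coefficient, or note that \eqref{eq:lem L3} holds with $\frac23-\delta$ in place of $\frac23$ for any $\delta>0$.
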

\begin{proof}[Proof of Lemma \ref{lem:L3 bd}]
Let us denote $I=\iint\frac{b}{a^2}|\nabla p|^2 \Delta p $. Integrating by parts twice yields, 
\begin{align*}
I&= \iint p\Delta \left(\frac{b}{a^2} |\nabla p|^2\right)  = \iint p\left(\frac{b}{a^2}\Delta |\nabla p|^2 + 2\nabla \left(\frac{b}{a^2}\right) \nabla (|\nabla p|^2) + \Delta \left(\frac{b}{a^2}\right)|\nabla p|^2\right).
\end{align*}
Upon integrating by parts in the second term on the right-hand side, we find,
\[
I  = \iint p\frac{b}{a^2}\Delta |\nabla p|^2 - 2 \iint \nabla p\cdot \nabla \left(\frac{b}{a^2}\right) |\nabla p|^2 -2 \iint p \Delta \frac{b}{a^2}|\nabla p|^2 +\iint p \Delta \left(\frac{b}{a^2}\right)|\nabla p|^2.
\]
Applying Lemma \ref{lem:basic}(\ref{item:Linfty and cpt supp})(\ref{item:L2 bd for Dp}) to the last two terms on the righthand side of the previous line, and recalling Assumption \ref{Assump: a,b,Phi}, gives,
\begin{align*}
I&\geq  \iint p\frac{b}{a^2}\Delta |\nabla p|^2- 2 \iint \nabla p\cdot \nabla \left(\frac{b}{a^2}\right) |\nabla p|^2 -C. 
\end{align*}
Now, we first use the identity $\Delta(|\nabla p|^2) = 2\nabla p\cdot \nabla \Delta p + 2\sum_{i,j=1}^d |p_{x_i,x_j}|^2$ and then integrate by parts in the first term to find,
\begin{align*}
\frac12I&\geq  \iint p\frac{b}{a^2}\nabla p\cdot \nabla \Delta p +  \iint p\frac{b}{a^2}\sum_{i,j=1}^d |p_{x_i,x_j}|^2- \iint \nabla p \cdot \nabla\left(\frac{b}{a^2}\right)|\nabla p|^2-C\\
&\geq -\iint  \nabla \cdot \left(p\frac{b}{a^2}\nabla p\right) \Delta p +  \iint p\frac{b}{a^2}\sum_{i,j=1}^d |p_{x_i,x_j}|^2- \iint \nabla p \cdot \nabla\left(\frac{b}{a^2}\right)|\nabla p|^2-C\\
& \geq  -\iint  \frac{b}{a^2}|\nabla p|^2 \Delta p - \iint p \Delta p \nabla \left(\frac{b}{a^2}\right)\cdot \nabla p - \iint p \frac{b}{a^2}|\Delta p|^2 + \iint p\frac{b}{a^2}\sum_{i,j=1}^d |p_{x_i,x_j}|^2-\\
&\quad \quad - \iint \nabla p \cdot \nabla\left(\frac{b}{a^2}\right)|\nabla p|^2-C.
\end{align*} 
We recognize the first term on the righthand side as $-I$, so that, upon rearranging, we find, 
\begin{equation}
\label{eq:I est}
 I \geq -\frac23 \iint p \Delta p \nabla \left(\frac{b}{a^2}\right)\cdot \nabla p - \frac23\iint p \frac{b}{a^2}|\Delta p|^2 + \frac23\iint p\frac{b}{a^2}\sum_{i,j=1}^d |p_{x_i,x_j}|^2-\frac43 \iint \nabla p \cdot \nabla\left(\frac{b}{a^2}\right)|\nabla p|^2-C.
\end{equation}
We will now demonstrate that the first term on the righthand side is bounded below by a multiple (independent of $m$) of the second term, up to a constant. To this end,  we use Young's inequality, followed by  Assumption \ref{Assump: a,b,Phi} and Lemma \ref{lem:basic}(\ref{item:Linfty and cpt supp})(\ref{item:L2 bd for Dp}), and obtain,
\begin{align*}
-\frac23 \iint p \Delta p \nabla \left(\frac{b}{a^2}\right)\cdot \nabla p &\geq
-\frac{1}{3} \iint p |\Delta p|^2 - \frac13\iint p  \left|\nabla \frac{b}{a^2}\right||\nabla p|^2\\
&\geq - C\iint p \frac{b}{a^2}|\Delta p|^2 -C.
\end{align*}
Using this to bound the righthand side of \eqref{eq:I est} from below yields the desired inequality \eqref{eq:lem L3}.

\end{proof}
We proceed with:

\begin{proof}[Proof of Proposition \ref{Prop: L3 for Dp}]
Multiplying \eqref{eq:pomega} by $\omega$ and integrating over $\Omega_T$ yields,
\begin{equation}
\label{eq:mult omega}
\iint p_t \omega - 
\iint\frac{\vert \nabla p \vert ^2}{a}\nabla \cdot\left(\frac{b}{a}\nabla p\right) - \iint\frac{\vert \nabla p \vert ^2}{a}\bar\Phi(p)
    = (m-1) \iint\frac{p}{b}\omega^2.
\end{equation}
 Using the definition of $\omega$; followed by an integration by parts, Lemma \ref{lem:basic}, and Assumption \ref{Assump: a,b,Phi}; and then the product rule, yields,
\begin{align*}
\iint p_t \omega&\leq \iint p_t \nabla \cdot\left(\frac{b}{a}\nabla p\right) 
    + \iint p_t \bar\Phi(p)=-\iint \nabla p_t \cdot\left(\frac{b}{a}\nabla p\right) +C\\
    &\leq-\frac{1}{2}\iint \frac{b}{a}|\nabla p|^2_t  +C=-\frac{1}{2}\iint \left(\frac{b}{a}|\nabla p|^2\right)_t + \frac{1}{2}\iint \left(\frac{b}{a}\right)_t|\nabla p|^2+C\\
    &\leq-\frac{1}{2}\iint \left(\frac{b}{a}|\nabla p|^2\right)_t +C,
\end{align*}
where the final equality follows from Assumption \ref{Assump: a,b,Phi}
 and the bounds  of Lemma \ref{lem:basic}. This inequality,  followed by \eqref{eq:mult omega}, give, 
 \[
 \frac{1}{2}\iint \left(\frac{b}{a}|\nabla p|^2\right)_t  \leq -\iint p_t \omega +C = -(m-1)\iint\frac{p}{b}\omega^2 -\iint\frac{\vert \nabla p \vert ^2}{a}\nabla \cdot\left(\frac{b}{a}\nabla p\right) - \iint\frac{\vert \nabla p \vert ^2}{a}\bar\Phi(p)+C.
 \]
 Assumption \ref{Assump: a,b,Phi}
 and Lemma \ref{lem:basic}  imply $\left|\iint\frac{\vert \nabla p \vert ^2}{a}\bar\Phi(p)\right|\leq C$. Thus we obtain,
\begin{equation}
\label{eq:mult omega 2}
\frac{1}{2}\frac{d}{dt}\iint \frac{b}{a}|\nabla p|^2 +   (m-1) \iint\frac{p}{b}\omega^2
    = \underbrace{-\iint\frac{\vert \nabla p \vert ^2}{a}\nabla \cdot\left(\frac{b}{a}\nabla p\right)}_{J} +C.
\end{equation}
We will now demonstrate that  $J$ can be bounded from above by terms that are independent of $m$, plus ones that can be ``absorbed" into $(m-1) \iint\frac{p}{b}\omega^2$. 
We have, 
\begin{align}
\nonumber
J &= -\iint \frac{b}{a^2}|\nabla p|^2 \Delta p  - \iint\frac{\vert \nabla p \vert ^2}{a}\nabla \left(\frac{b}{a}\right)\cdot\nabla p\\
&\leq C\iint p \frac{b}{a^2}|\Delta p|^2 - \frac23\iint p\frac{b}{a^2}\sum_{i,j=1}^d |p_{x_i,x_j}|^2+ \frac43\iint \nabla p \cdot \nabla\left(\frac{b}{a^2}\right)|\nabla p|^2 + C - \nonumber\\
&\quad \quad - \iint\frac{\vert \nabla p \vert ^2}{a}\nabla \left(\frac{b}{a}\right)\cdot\nabla p, \label{eq:J}
\end{align}
where the inequality follows from Lemma \ref{lem:L3 bd}. We will now
estimate the last term on the righthand side of the previous line.  Young's inequality (with $\ep>0$ to be determined) yields,
\begin{equation}
\label{eq:J2}
\left|\iint\frac{\vert \nabla p \vert ^2}{a}\nabla \left(\frac{b}{a}\right)\cdot\nabla p\right| \leq 
\ep\iint|\nabla p|^4 +\frac{1}{4\ep}\iint \left| \frac{1}{a}\nabla \left(\frac{b}{a}\right)\cdot\nabla p\right|^2\leq \frac{\ep}{2}\iint|\nabla p|^4 +\frac{C}{\ep},
\end{equation}
where the second inequality follows from Assumption \ref{Assump: a,b,Phi}
 and Lemma \ref{lem:basic}(\ref{item:L2 bd for Dp}).  Similarly we find that the absolute value of the third term on the right-hand side of \eqref{eq:J} is also bounded from above by $\frac{\ep}{2}\iint|\nabla p|^4 +\frac{C}{\ep}$. Thus we find,
 \begin{equation}
 \label{eq:J3}
 J\leq C\iint p \frac{b}{a^2}|\Delta p|^2 - \frac23\iint p\frac{b}{a^2}\sum_{i,j=1}^d |p_{x_i,x_j}|^2 +C+ \ep\iint|\nabla p|^4 +\frac{C}{\ep} .
 \end{equation}
Lemma \ref{lem:chu}, Lemma \ref{lem:basic}(\ref{item:Linfty and cpt supp})(\ref{item:L2 bd for Dp}), and  Assumption \ref{Assump: a,b,Phi}  yield, for some  constant $\tilde{C}>0$, independent of $m$,
 \begin{align*}
\iint|\nabla p |^4& \leq
 C\iint p\frac{b}{a^2}|\Delta p|^2 + \tilde{C}\sum_{i,j=1}^d \iint p\frac{b}{a^2}|p_{x_i, x_j}|^2.
 \end{align*}
 We now  use the previous line  to bound the righthand side of \eqref{eq:J3} and obtain,
 \begin{align}
\nonumber
J &\leq C\iint p \frac{b}{a^2}|\Delta p|^2 - \frac23\iint p\frac{b}{a^2}\sum_{i,j=1}^d |p_{x_i,x_j}|^2+C+\\
&\quad\quad +\ep\left(C +C\iint p\frac{b}{a^2}|\Delta p|^2 + \tilde{C}\sum_{i,j=1}^d \iint  p\frac{b}{a^2}|p_{x_i, x_j}|^2\right) +\frac{C}{\ep}.
\end{align}
Taking $\ep=\min\left\{1,\frac{1}{3\tilde{C}}\right\}$ and combining like terms yields,
\[
J\leq C\iint p \frac{b}{a^2}|\Delta p|^2 - \frac13\iint p\frac{b}{a^2}\sum_{i,j=1}^d |p_{x_i,x_j}|^2+C.
\]
Upon using this estimate to bound the righthand side of  \eqref{eq:mult omega 2} from above, we find,
\begin{equation}
\label{eq:afterJ}
\frac{1}{2}\frac{d}{dt}\iint \frac{b}{a}|\nabla p|^2 + \frac13\iint p\frac{b}{a^2}\sum_{i,j=1}^d |p_{x_i,x_j}|^2+  (m-1) \iint\frac{p}{b}\omega^2
    \leq C\iint p \frac{b}{a^2}|\Delta p|^2 +C.
\end{equation}
To control the first term on the righthand side of the previous line, we recall the definition,
\[
\omega =  \frac{b}{a}\Delta p +\nabla \frac{b}{a}\cdot\nabla p 
    + \bar\Phi(p).
    \]
 Thus,    Lemma \ref{lem:basic}, Assumption \ref{Assump: a,b,Phi}, and a number of standard manipulations imply,
\begin{equation}
\label{eq:WTS}
\iint p \frac{b}{a^2}|\Delta p|^2\leq C \iint\frac{p}{b}\omega^2 +C;
\end{equation}
together with \eqref{eq:afterJ}, this  gives,
\[
\frac{1}{2}\int_{B_{CT}} \frac{b}{a}|\nabla p(x,T)|^2\, dx- \frac{1}{2}\int_{B_{CT}} \frac{b}{a}|\nabla p(x,0)|^2\, dx+ \frac13\iint p\frac{b}{a^2}\sum_{i,j=1}^d |p_{x_i,x_j}|^2+  (m-C) \iint\frac{p}{b}\omega^2\leq C,
\]
from which the desired estimate \eqref{eq:PropDp} follows by Assumption \ref{Assump: initial data conditions}.
Finally, Lemma \ref{lem:chu},  the inequality $|\Delta p|^2\leq d \sum_{i,j=1}^d |p_{x_i,x_j}|^2$, and Lemma \ref{lem:basic}(\ref{item:Linfty and cpt supp}) imply 
\[
\iint |\nabla p|^4\leq C\iint \sum_{i,j=1}^d p^2 |p_{x_i,x_j}|^2.
\]
The estimate \eqref{eq:L4bd} therefore follows  from the previous line,  \eqref{eq:PropDp}, and Assumption \ref{Assump: a,b,Phi}.
\end{proof}

\subsection{$L^3$ Aronson-B{\'e}nilan Estimate} 

We recall the notation $\bgamma$ and $\tPhi$ of \eqref{eq:bgamma},   abbreviate $\tPhi (x,t,p_m)$ as $\tPhi(p_m)$ as before, and 
define 
 \[
 w_m = \frac{a}{b} \nabla \cdot \left(\frac{b}{a} \nabla p_m\right) + \tPhi(p_m).
 \]
  The definitions of $w_m$  and $\bgamma$ yield,
 \begin{equation}
 \label{eq:w Del p}
 w_m= \Delta p_m + \bgamma \cdot \nabla p_m+\tPhi(p_m),
 \end{equation}
  so that the equation  \eqref{eqn: PME Pressure} that $p_m$ satisfies becomes,
 \begin{equation}
 \label{eq:pw}
 \partial_tp_m -\frac{|\nabla p_m|^2}{a}= (m-1)\left(\frac{p_mw_m}{a}\right).
 \end{equation}
We will obtain a bound on the $L^3$ norm of the negative part of $w_m$, where the negative part of any $\alpha\in \R$ is denoted by,
 \[
|\alpha|_-=\begin{cases}
0 &\text{ for } \alpha>0,\\
-\alpha &\text{ for }\alpha\leq 0.
\end{cases}
\]

\begin{Prop}[$L^3$ AB-Estimate] \label{Prop: AB-est L3}
 Suppose Assumptions \ref{Assump: a,b,Phi}, \ref{Assump: initial data conditions},  \ref{Assump: supersol construction}, and \ref{A:tildelambda}  hold. Fix $T>0$.  
 There  exists a positive constant $C$, independent of $m$, such that for
     \begin{equation}
     \label{eq:AB m}
m>\max\left\{2, 1+\frac{(\sup a^{-1} +1)}{\inf a^{-1}}\frac{8}{3}\right\},
\end{equation}
we have
     \begin{align}
     \iint _{\Omega _T} |w_m(x,t)|_-^3\, dx\, dt &\leq C  \label{eq:prop wm3}\quad \text{ and }\quad  \iint _{\Omega _T} \vert \Delta p_m(x,t) \vert \, dx\, dt \leq C.           \end{align}
\end{Prop}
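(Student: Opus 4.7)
The plan is to derive an evolution equation for $w_m$ by differentiating the defining identity $w_m = \Delta p_m + \bgamma\cdot\nabla p_m + \tPhi(p_m)$ in time and substituting $\partial_t p_m$ from \eqref{eq:pw}. Writing $L = \Delta + \bgamma\cdot\nabla$, using $Lp_m = w_m - \tPhi$, and expanding $L(p_m w_m /a)$ via the Leibniz rule, the equation takes the schematic form
\[
\partial_t w_m = (m-1)\frac{p_m}{a}\Delta w_m + (m-1)\frac{w_m^2}{a} + (m-1)\frac{2\nabla p_m\cdot\nabla w_m}{a} + R_m,
\]
where $R_m$ collects all terms depending only on $\nabla p_m$, $|\nabla p_m|^2$, $\bgamma$, $\bgamma_t$, $\nabla a$, $\partial_t a$, $\tPhi$, $\tPhi_t$, $\tPhi_p$ (with at most linear $m$-factors carried by the $(m-1)p_m/a$ prefactor).

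Next I would multiply by $-|w_m|_-^2$ and integrate over $\R^d$. Using $\partial_t |w_m|_-^3 = -3|w_m|_-^2\partial_t w_m \, \mathbbm 1_{\{w_m<0\}}$ gives $\tfrac13\tfrac{d}{dt}\int |w_m|_-^3 = -\int |w_m|_-^2 \partial_t w_m$. Integration by parts on the $\Delta w_m$ term produces the good contributions
\[
-2(m-1)\int \frac{p_m|w_m|_-}{a}|\nabla|w_m|_-|^2 \quad\text{and}\quad -(m-1)\int\frac{|w_m|_-^4}{a},
\]
plus error terms of the form $(m-1)\!\int\!\tfrac{|w_m|_-^2 \nabla p_m\cdot\nabla w_m}{a}$ and $(m-1)\!\int\!\tfrac{p_m|w_m|_-^2\nabla a\cdot\nabla w_m}{a^2}$. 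The crucial trick is to rewrite $|w_m|_-^2\nabla w_m = -\tfrac13\nabla |w_m|_-^3$ on $\{w_m<0\}$, integrate by parts to expose $\int |w_m|_-^3 \Delta p_m / a$, and then substitute $\Delta p_m = w_m - \bgamma\cdot\nabla p_m - \tPhi$. The resulting $\int|w_m|_-^3 w_m/a = -\int|w_m|_-^4/a$ is another negative contribution, so the total good $|w_m|_-^4$-coefficient sharpens to $-\tfrac{4(m-1)}{3}\int|w_m|_-^4/a$, while the leftover pieces are of the harmless form $(m-1)\int|w_m|_-^3\cdot(\text{bounded})$ or $(m-1)\int|w_m|_-^3|\nabla p_m|$.

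Each leftover piece is then controlled by Young's inequality: terms $|w_m|_-^3|\nabla p_m|$ split via exponents $(4/3,4)$ into $\ep |w_m|_-^4 + C_\ep|\nabla p_m|^4$, the latter being uniformly bounded by Proposition \ref{Prop: L3 for Dp}; terms $|w_m|_-^2|\nabla w_m|$ split via $(2,2)$ into $|w_m|_-|\nabla|w_m|_-|^2$ (absorbed into the first good term) plus $|w_m|_-^3$ (absorbed via Young again). The precise numerical balance — ensuring the absorbable $|w_m|_-^4$ coefficient stays strictly below $\tfrac{4}{3}\inf a^{-1}$ — is what forces the explicit threshold \eqref{eq:AB m}. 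Integrating in $t$ and using that $w_m^0\in L^3(\R^d)$ uniformly in $m$ (from Assumption \ref{Assump: initial data conditions} plus compact support) yields $(m-1)\iint|w_m|_-^4 \le C$, whence $\iint|w_m|_-^3 \le C$ follows by Hölder on $\Omega_T$. The $L^1$-bound on $\Delta p_m$ is then immediate: since $p_m(\cdot,t)$ is compactly supported, $\int_{\R^d}\Delta p_m = 0$, so $\int|\Delta p_m| = 2\int|\Delta p_m|_-$, and $|\Delta p_m|_- \le |w_m|_- + |\bgamma||\nabla p_m| + |\tPhi|$ together with Lemma \ref{lem:basic}(\ref{item:L2 bd for Dp}) closes the estimate.

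The main obstacle is the book-keeping in the identity derivation: every differentiation produces lower-order terms laden with $\nabla a$, $\nabla b$, $\bgamma_t$, $\partial_t a$, etc., and one must track carefully which carry factors of $(m-1)$ and which do not. The delicate point is the $\nabla p_m\cdot\nabla w_m$ term; without the rewrite $|w_m|_-^2\nabla w_m = -\tfrac13\nabla|w_m|_-^3$ followed by substituting for $\Delta p_m$, this term would contribute a positive $|w_m|_-^4$ coefficient and the estimate could not be closed for any $m$. Once that rewrite is in place, everything else is routine absorption via Young's inequality.
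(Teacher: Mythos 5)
Your approach is structurally close to the paper's but not identical. The paper first derives a pointwise differential inequality for $w_t$ (its Lemma \ref{lem:AB}), then multiplies by $-|w|_-$ to obtain $\tfrac12\partial_t\iint|w|_-^2$ on the left and, after the same rewrite-and-substitute trick you describe, a dominant negative term $-c(m-1)\iint a^{-1}|w|_-^3$; Young's inequality (with $\iint|\nabla p|^3\leq C$) then closes an $L^3$ estimate directly. You instead multiply by $-|w|_-^2$ to drive an $L^4$ estimate and recover $L^3$ by H\"older at the end. The key ideas — expose the good term $-c(m-1)\iint a^{-1}|w|_-^{k+1}$ by writing $|w|_-^{k-1}\nabla w=-\tfrac1k\nabla|w|_-^k$, integrating by parts, and substituting $\Delta p=w-\bgamma\cdot\nabla p-\tPhi$ — are shared.

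The decisive gap in your version is the initial-data term. Integrating your inequality from $0$ to $T$ produces $-\tfrac13\int_{\R^d}|w^0_m|_-^3\,dx$ on the left, so you need $\int|w^0_m|_-^3\leq C$ uniformly in $m$, i.e.\ $\Delta p_m^0\in L^3$ uniformly. But Assumption \ref{Assump: initial data conditions} (via \eqref{eq: assump p0 bds}) only gives $\|\Delta p_m^0\|_{L^2}\leq C$, hence only $\int|w_m^0|_-^2\leq C$. This is exactly why the paper multiplies by $-|w|_-$ rather than $-|w|_-^2$: the lower power is calibrated to the $L^2$ initial data. Your argument does not close under the stated hypotheses unless one strengthens the assumption on $\Delta p_m^0$.

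Two smaller points. First, your arithmetic for the ``sharpened'' coefficient $-\tfrac{4(m-1)}{3}\int a^{-1}|w|_-^4$ is off: the error term $(m-1)\int a^{-1}|w|_-^2\nabla p\cdot\nabla w$ coming from the integration by parts of $\Delta w$ does give $-\tfrac13(m-1)$ after the rewrite, but the explicit PDE term $-(m-1)\int a^{-1}|w|_-^2\cdot 2\nabla p\cdot\nabla w$ contributes $+\tfrac23(m-1)$ after the same rewrite (the sign flips because $\Delta p$ appears with the opposite sign under the divergence). Adding the $-(m-1)$ from the $w^2$ term, the correct net coefficient is $-\tfrac{2(m-1)}{3}\int a^{-1}|w|_-^4$. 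It remains negative, so the argument still closes, but your explicit threshold would not reproduce \eqref{eq:AB m}. Second, the conclusion of your final Gronwall step is $\iint|w|_-^4\leq C$, not $(m-1)\iint|w|_-^4\leq C$ — the right-hand side of the integrated inequality carries a factor $C(m-1)$ — but $\iint|w|_-^4\leq C$ already gives $\iint|w|_-^3\leq C$ by H\"older, so that slip is harmless. Your derivation of the $L^1$ bound on $\Delta p_m$ from $\int\Delta p_m=0$ and the bound $(\Delta p_m)_-\leq|w_m|_-+|\bgamma||\nabla p_m|+|\tPhi|$ is equivalent to the paper's.
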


The main idea of the proof of Proposition \ref{Prop: AB-est L3} is similar to that of \cite[Lemma 3.3]{david2021convective}; namely, to use the equation \eqref{eq:pw} to understand the time evolution of $|w_m|_-$. 
The first step is to find a PDE that $w_m$ is a supersolution of. We separate this into a lemma. For the remainder of this subsection, we  drop the subscript $m$.

 \begin{Lemma}[Equation for $|w|_-$]
 \label{lem:AB}
 Under the assumptions of Proposition \ref{Prop: AB-est L3}, we have, 
 \begin{align}
\label{eq:w final}
w_t&\geq \tilde{C}w^2 - C|w| +2a^{-1}\nabla p\cdot \nabla w   -C\left(1+  |\nabla p|^2  +|p_t| \right)+(m-1)\left( \bgamma\cdot\nabla (a^{-1}pw) +\Delta \left(a^{-1}pw\right)\right),
\end{align}
where $\tilde{C}=\frac{3}{8d\sup a}$ and $C_0$ denotes a positive constant independent of $m$.
 \end{Lemma}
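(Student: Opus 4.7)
The plan is to differentiate the defining identity $w = \Delta p + \bgamma\cdot\nabla p + \tPhi(p)$ in time and substitute the PDE \eqref{eq:pw} for $p_t$. This gives
\[
w_t = \Delta p_t + \bgamma\cdot\nabla p_t + \bgamma_t\cdot\nabla p + \tPhi_t(p) + \tPhi_p(p)p_t,
\]
and writing $p_t = a^{-1}|\nabla p|^2 + (m-1)a^{-1}pw$, the $(m-1)$ contributions inside $\Delta p_t$ and $\bgamma\cdot\nabla p_t$ produce exactly the term $(m-1)\bigl(\Delta(a^{-1}pw) + \bgamma\cdot\nabla(a^{-1}pw)\bigr)$ appearing in the target inequality. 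The remaining task is to bound from below the $O(1)$-in-$m$ remainder
\[
R := \Delta(a^{-1}|\nabla p|^2) + \bgamma\cdot\nabla(a^{-1}|\nabla p|^2) + \bgamma_t\cdot\nabla p + \tPhi_t(p) + \tPhi_p(p)p_t
\]
by $\tilde{C}w^2 - C|w| + 2a^{-1}\nabla p\cdot\nabla w - C(1+|\nabla p|^2+|p_t|)$.

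The key second-order identity is Bochner: $\Delta|\nabla p|^2 = 2|\mathrm{Hess}\,p|^2 + 2\nabla p\cdot\nabla\Delta p$. Expanding $\Delta(a^{-1}|\nabla p|^2)$ via the product rule and substituting $\nabla\Delta p = \nabla w - \nabla(\bgamma\cdot\nabla p) - \nabla\tPhi(p)$ (which follows from \eqref{eq:w Del p}) yields the required $\frac{2}{a}\nabla p\cdot\nabla w$ together with a leading coercive term $\frac{2}{a}|\mathrm{Hess}\,p|^2$ and a collection of lower-order terms. Several $\bgamma\cdot\nabla|\nabla p|^2$ contributions cancel between $\bgamma\cdot\nabla(a^{-1}|\nabla p|^2)$ and the piece $-\frac{2}{a}\nabla p\cdot\nabla(\bgamma\cdot\nabla p)$ coming from the Bochner substitution; the surviving cross terms are of the type $|\mathrm{Hess}\,p||\nabla p|$ (e.g.\ from $2\nabla|\nabla p|^2\cdot\nabla(1/a)$), and are absorbed by Young's inequality into $\frac{1}{a}|\mathrm{Hess}\,p|^2$, at the cost of $+C|\nabla p|^2$ errors.

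The remaining half, $\frac{1}{a}|\mathrm{Hess}\,p|^2$, is converted into $\tilde{C}w^2$ as follows. Use $|\mathrm{Hess}\,p|^2 \geq \frac{1}{d}(\Delta p)^2$ and $\Delta p = w - \bgamma\cdot\nabla p - \tPhi(p)$, so that expanding the square and bounding the cross term $-2w(\bgamma\cdot\nabla p + \tPhi(p))$ from below by $-2|w|(|\bgamma||\nabla p| + |\tPhi|)$ gives $(\Delta p)^2 \geq w^2 - C|w| - C|w||\nabla p|$. A further Young step $|w||\nabla p|\leq \tfrac{1}{4}w^2 + |\nabla p|^2$ yields $(\Delta p)^2 \geq \tfrac{3}{4}w^2 - C|w| - C(1+|\nabla p|^2)$. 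Dividing by $d\sup a$ produces $\frac{1}{a}|\mathrm{Hess}\,p|^2 \geq \tilde{C}w^2 - C|w| - C(1+|\nabla p|^2)$ with precisely $\tilde{C}=\frac{3}{4d\sup a}$. Finally, Assumption \ref{Assump: a,b,Phi} yields $|\tPhi_p|\leq C$ so that $\tPhi_p(p)p_t \geq -C|p_t|$, and $\bgamma_t\cdot\nabla p + \tPhi_t(p) \geq -C(1+|\nabla p|^2)$, completing the estimate.

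The main obstacle is the bookkeeping of Young's constants. All Hessian cross terms arising from expanding $\Delta(a^{-1}|\nabla p|^2)$ and from $\nabla\Delta p$ must be absorbed into one copy of $\frac{1}{a}|\mathrm{Hess}\,p|^2$ so that the other copy survives intact; the partial (rather than full) Young step applied to $-2w(\bgamma\cdot\nabla p+\tPhi)$ is what produces the $-C|w|$ term in the statement without degrading the $\frac{3}{4}$ factor that is needed to land on the sharp constant $\tilde C$.
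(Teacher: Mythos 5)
Your proposal follows essentially the same route as the paper's proof: differentiate $w = \Delta p + \bgamma\cdot\nabla p + \tPhi(p)$ in time, substitute the pressure equation \eqref{eq:pw} for $p_t$, apply the Bochner identity $\Delta|\nabla p|^2 = 2|\mathrm{Hess}\,p|^2 + 2\nabla p\cdot\nabla\Delta p$, replace $\nabla \Delta p$ using \eqref{eq:w Del p}, and convert the surviving Hessian coercivity into $\tilde{C}w^2$ via $|\mathrm{Hess}\,p|^2\geq\frac1d(\Delta p)^2$. One place where your bookkeeping is genuinely tighter than the paper's is the observation that the Hessian cross terms $-2a^{-1}\sum_{i,j}\bgamma_j p_{x_i}p_{x_i x_j}$ (from the Bochner substitution) and $2a^{-1}\sum_{i,j}\bgamma_i p_{x_j}p_{x_i x_j}$ (from $a^{-1}\bgamma\cdot\nabla|\nabla p|^2$) cancel exactly by symmetry of the Hessian; the paper instead Young-bounds each of these, consuming a fraction of the coercive $\sum|p_{x_i x_j}|^2$ term. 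Consequently the two derivations allocate the factor $\frac34$ differently (the paper retains $\frac{3}{4a}\sum|p_{x_i x_j}|^2$ before going to $(\Delta p)^2$, you retain $\frac{1}{a}\sum|p_{x_i x_j}|^2$ and pick up $\frac34$ from the square expansion), but both land on the stated $\tilde{C}=\frac{3}{4d\sup a}$. A small slip: the Young step ``$|w||\nabla p|\leq\frac14 w^2+|\nabla p|^2$'' as written does not account for the bounded coefficient ($|\bgamma|$, etc.) on the cross term; you need the weighted form $2|w||\bgamma||\nabla p|\leq\frac14 w^2 + 4|\bgamma|^2|\nabla p|^2$ so that only $\frac14 w^2$ is consumed, which preserves the $\frac34$ and is absorbed into $C|\nabla p|^2$. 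This is purely a matter of choosing the Young parameter and does not affect the validity of the argument.
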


 \begin{proof}[Proof of Lemma \ref{lem:AB}]
 Taking the time derivative of \eqref{eq:w Del p} yields,
\begin{equation}
\label{eq:wt}
w_t = \Delta p_t + \bgamma\cdot \nabla p_t + \bgamma_t\cdot \nabla p + (\tPhi(p))_t.
\end{equation}
We will now manipulate the righthand side and replace  derivatives of $p$ of order 2 or higher with $w$ and its derivatives. 
We begin with the first term. We take Laplacian of \eqref{eq:pw} and then perform standard manipulations to obtain,
\begin{align}
\Delta p_t &= \Delta  \LP \frac{\vert \nabla p \vert ^2}{a} \RP + (m-1)\Delta \left(a^{-1}pw\right) \nonumber \\
&= a^{-1}\Delta |\nabla p|^2 + 2 \nabla |\nabla p|^2\cdot \nabla a^{-1} +|\nabla p|^2\Delta a^{-1} + (m-1)\Delta \left(a^{-1}pw\right). \label{eq:Delpt}
\end{align}
We find, upon performing a direct calculation and then using  \eqref{eq:w Del p},
\begin{align}
a^{-1}\Delta \vert \nabla p \vert ^2 &= 2a^{-1}\sum_{i,j=1}^d |p_{x_i,x_j} | ^2 + 2a^{-1}\nabla p\cdot \nabla \Delta p \nonumber\\
&= 2a^{-1}\sum_{i,j=1}^d |p_{x_i,x_j} | ^2 + 2a^{-1}(\nabla p\cdot \nabla w - \nabla p\cdot \nabla(\bgamma \cdot \nabla p) - \nabla p\cdot \nabla\tPhi(p)). \label{eq: Del nabla p 2}
\end{align}
To bound the third term on the righthand side of the previous line, we perform standard calculations, followed by two applications of Young's inequality, and then use Assumption \ref{Assump: a,b,Phi}, to obtain,
\begin{align*}
 \nabla p\cdot \nabla(\bgamma \cdot \nabla p)&=\sum_{i=1}^d p_{x_i}\partial_{x_i}\sum_{j=1}^d \bgamma_j p_{x_j} = \sum_{i,j=1}^d (p_{x_i}p_{x_i,x_j}\bgamma_j  +p_{x_i}\partial_{x_i}\bgamma_j p_{x_j})\leq \\
 &\leq \sum_{i,j=1}^d \left(\frac{|p_{x_i,x_j}|^2}{8} +2\bgamma_j^2p_{x_i}^2  +\frac{(p_{x_i}\partial_{x_i}\bgamma_j )^2}{2}+\frac{p_{x_j}^2}{2}\right)
 \leq \sum_{i,j=1}^d \frac{|p_{x_i,x_j}|^2}{8} +C|\nabla p|^2.
\end{align*}
(Here $\bgamma_j$ denotes the $j$-th component of $\bgamma$.) 
For the fourth term on the righthand side of \eqref{eq: Del nabla p 2}, we find, by using Assumption \eqref{Assump: a,b,Phi},
\[
\nabla p \cdot \nabla \tPhi(p) = \tPhi_p(p)|\nabla p|^2\leq 0.
\]
Using the two previous estimates in \eqref{eq: Del nabla p 2} yields,
 \begin{equation}
\label{eq: Del nabla p 2 2}
a^{-1}\Delta \vert \nabla p \vert ^2 \geq \frac{3}{2 } a^{-1}\sum_{i,j=1}^d |p_{x_i,x_j}| ^2 +2a^{-1}\nabla p\cdot  \nabla w  - C |\nabla p|^2.
\end{equation}
For the second term on the righthand side of \eqref{eq:Delpt}, a direct calculation and Young's inequality yield, 
\[
    2 \nabla \vert \nabla p \vert ^2 \cdot \nabla a^{-1} = -4\sum_{i,j=1}^d p_{x_i, x_j} p_{x_j}\frac{a_{x_i}}{a^2}
 \geq  -\frac{1}{2a}\sum_{i,j=1}^d |p_{x_i, x_j}| ^2 - 8 \frac{\vert \nabla a \vert ^2}{a^3} \vert \nabla p \vert ^2. 
    \]
Using the previous line, \eqref{eq: Del nabla p 2 2}, and  Assumption \ref{Assump: a,b,Phi} to bound the righthand side of  \eqref{eq:Delpt} from below gives,
\begin{equation}
\label{eq:bd Del pt}
\Delta p_t \geq a^{-1}\sum_{i,j=1}^d (p_{x_i,x_j} ) ^2 +2a^{-1}\nabla p\cdot  \nabla w  - C |\nabla p|^2  + (m-1)\Delta \left(a^{-1}pw\right).
\end{equation}

We move on to the second term on the righthand side of \eqref{eq:wt}. Upon using \eqref{eq:pw} and performing the usual calculations, we find,
\begin{align*}
\bgamma\cdot \nabla p_t &= \bgamma\cdot \nabla (|\nabla p|^2 a^{-1}) + (m-1)\bgamma\cdot\nabla (a^{-1}pw)\\
&=a^{-1}\bgamma\cdot \nabla |\nabla p|^2  +  |\nabla p|^2 \bgamma\cdot \nabla  a^{-1} + (m-1)\bgamma\cdot\nabla (a^{-1}pw)\\
&= 2a^{-1}\sum_{i,j=1}^d \bgamma_i p_{x_j}p_{x_i,x_j}  +  |\nabla p|^2 \bgamma\cdot \nabla  a^{-1} + (m-1)\bgamma\cdot\nabla (a^{-1}pw)\\
&\geq -2a^{-1}\sum_{i,j=1}^d \left(\frac{|p_{x_i,x_j}|^2}{8} + 2\bgamma_i^2 p_{x_j}^2\right) +  |\nabla p|^2 \bgamma\cdot \nabla  a^{-1} + (m-1)\bgamma\cdot\nabla (a^{-1}pw) \\
&\geq -\frac{1}{4a}\sum_{i,j=1}^d |p_{x_i,x_j}|^2 - C   |\nabla p|^2 + (m-1)\bgamma\cdot\nabla (a^{-1}pw) ,
\end{align*}
where  the final two inequalities follow from Young's inequality and Assumption \ref{Assump: a,b,Phi}, respectively.

We now put together all of our bounds for the terms on the righthand side of \eqref{eq:wt}: namely, the previous bound, \eqref{eq:bd Del pt}, and the estimate $\bgamma_t\cdot \nabla p\geq -C|\nabla p|$. We obtain,
\begin{align*}
w_t&\geq  \frac{3}{4a}\sum_{i,j=1}^d |p_{x_i,x_j}|^2 +2a^{-1}\nabla p\cdot \nabla w   -C|\nabla p|- C |\nabla p|^2  +(\tPhi(p))_t+ \\
&\quad \quad \quad  +(m-1)\left( \bgamma\cdot\nabla (a^{-1}pw) +\Delta \left(a^{-1}pw\right)\right).
\end{align*}
Next we use the fact that $\sum_{i,j=1}^d |p_{x_i,x_j}|^2\geq  \frac{1}{d} |\Delta p|^2$, along with  \eqref{eq:w Del p}, to find,
\begin{align*}
 \frac{3}{4a}\sum_{i,j=1}^d |p_{x_i,x_j}| ^2&\geq  \frac{3}{4d\sup a}(w-\bgamma\cdot \nabla p - \tPhi(p))^2\\
 &= \frac{3}{4da}\left(w^2-2w(\bgamma\cdot \nabla p +\tPhi(p))+ (\bgamma\cdot \nabla p +\tPhi(p))^2 \right)\\&
 \geq \frac{3}{4da}\left(w^2 -\frac{1}{2}w^2 - C|\nabla p|^2 -C|w| -C\right)
\\&  = \tilde{C}w^2 - C|w| -C|\nabla p|^2-C,
\end{align*}
where to obtain the second inequality we used Young's inequality, as well as that, according to Assumption \ref{Assump: a,b,Phi} and Lemma \ref{lem:basic}(\ref{item:Linfty and cpt supp}), we have $\|\bgamma\|_{L^\infty} +\|\tPhi(p)\|_{L^\infty}\leq C$. We  denote  $\tilde{C}=\frac{3}{8d\sup a}$. Furthermore, we have 
\[
\tPhi(p)_t = \tPhi_t(p) + \tPhi_p(p)p_t \geq -C -C|p_t|,
\]
where we've again used Assumption \ref{Assump: a,b,Phi} and Lemma \ref{lem:basic}(\ref{item:Linfty and cpt supp}) to obtain that $\Phi_t(p)$ and $\tPhi_p(p)$ are uniformly bounded independently of $m$.

Combining the three previous estimates, and using  $|\nabla p|\leq (|\nabla p|^2+1)/2$, yields 
\begin{align*}
w_t&\geq \tilde{C}w^2 - C|w| +2a^{-1}\nabla p\cdot \nabla w   -C\left(1+  |\nabla p|^2  +|p_t| \right)+(m-1)\left( \bgamma\cdot\nabla (a^{-1}pw) +\Delta \left(a^{-1}pw\right)\right),
\end{align*}
which is exactly 
  \eqref{eq:w final}.
\end{proof}

With this lemma in hand, we proceed with:
\begin{proof}[Proof of Proposition \ref{Prop: AB-est L3}]
We multiply \eqref{eq:w final} by $-|w|_-$, yielding
\begin{align*}
\frac{1}{2}(|w|_-^2)_t &= -w_t|w|_-\leq \\
&-\wm \tilde{C}w^2 +  C|w|\wm -\wm 2a^{-1}\nabla p\cdot \nabla w   +C(1+  |\nabla p|^2  +|p_t| )\wm\\&\quad \quad -(m-1)\left( \bgamma\cdot\nabla (a^{-1}pw)\wm +\Delta \left(a^{-1}pw\right)\wm\right)
\\&= -\tilde{C}\wm^3 +  C\wm^2 +2a^{-1}\wm\nabla p\cdot \nabla \wm  +C\left(1+  |\nabla p|^2  +|p_t| \right)\wm\\&\quad \quad +(m-1)\left( \bgamma\cdot\nabla (a^{-1}p\wm)\wm +\Delta \left(a^{-1}p\wm\right)\wm\right),
\end{align*}
where to obtain the last equality we used, for instance,  $-w|w|_- = \wm^2$ and $ -\wm \nabla \wm= \wm \nabla w$. 
 Now we integrate over  $\Omega_T=B_{CT} \times (0,T)$ (we recall that the support of $p(\cdot, t)$ is contained in $B_{CT}$ for almost every  $t \in [0,T]$).  
 We obtain,
 \begin{align}
 \label{eq:wm3}
 -\int_{B_CT} \frac{|w_0|_-^2}{2}\, dx \leq -\tilde{C}\iint \wm^3 +   C\iint \wm^2  +I_1 +I_2+I_3+I_4, 
 \end{align}
 where 
 \begin{align*}
 &I_1 = \iint 2a^{-1}\wm\nabla p\cdot \nabla \wm, \quad I_2= C\iint \left(1+  |\nabla p|^2  +|p_t| \right)\wm, \\
& I_3=(m-1)\iint \bgamma\cdot\nabla (a^{-1}p\wm)\wm, \text{ and }\quad I_4=(m-1)\iint\Delta \left(a^{-1}p\wm\right)\wm.
 \end{align*}
We begin with $I_4$: we integrate by parts, perform standard manipulations, and finally integrate by parts again in the first term:
\begin{align*}
I_4 &= -(m-1)\iint \nabla  \left(a^{-1}p\wm\right)\cdot\nabla \wm\\
&= -(m-1) \iint \wm \nabla \wm\cdot \nabla  \left(a^{-1}p\right) -(m-1) \iint a^{-1}p\nabla  \wm\cdot \nabla \wm\\
&=   -(m-1) \iint \frac{1}{2}\nabla (\wm^2) \cdot\nabla \left(a^{-1}p\right)  -(m-1)   \iint a^{-1}p|\nabla  \wm|^2\\
&=    \frac{m-1}{2}\iint \wm^2 \Delta \left(a^{-1}p\right) -(m-1)  \iint a^{-1}p|\nabla  \wm|^2 \leq  \frac{m-1}{2}\iint \wm^2 \Delta \left(a^{-1}p\right).
\end{align*}
Using \eqref{eq:w Del p}, we find,
\begin{align*}
\Delta \left(a^{-1}p\right) &= a^{-1}\Delta p + 2\nabla a^{-1}\cdot \nabla p + p\Delta a^{-1}\\&
= a^{-1}(w - \bgamma\cdot \nabla p -\tPhi(p)) + 2\nabla a^{-1}\cdot \nabla p + p\Delta a^{-1}\\
&=a^{-1}w + (2\nabla a^{-1} -a^{-1}\bgamma)\cdot \nabla p +p  \Delta a^{-1} - a^{-1}\tPhi(p).
\end{align*}
 Using this in the bound on $I_4$, and recalling $w\wm^2 = -\wm^3$, we find,
 \begin{align*}
I_4 &\leq -  \frac{m-1}{2}\iint a^{-1}\wm^3 + \frac{m-1}{2}\iint \wm^2(2\nabla a^{-1} -a^{-1}\bgamma)\cdot \nabla p+\\
&\quad  +\frac{m-1}{2}\iint  \wm^2(p  \Delta a^{-1} - a^{-1}\tPhi(p)).
 \end{align*}
 The key point is that the first term is nonpositive, and, moreover, is multiplied by (a factor of) $m-1$. We will ``absorb" ``troublesome" terms into this one. We start with the second term above: Young's inequality yields,
 \[
 \wm^2(2\nabla a^{-1} -a^{-1}\bgamma)\cdot \nabla p\leq \frac{A}{8} \wm^{3} +C |(2\nabla a^{-1} -a^{-1}\bgamma)\cdot \nabla p|^{3}
 \]
 where we denote $A=\inf a^{-1}$, which is positive by Assumption \ref{Assump: a,b,Phi}. Since $|(2\nabla a^{-1} -a^{-1}\bgamma)|$ and $|p  \Delta a^{-1} - a^{-1}\tPhi(p)|$ are uniformly bounded, independently of $m$, the previous line and Proposition \ref{Prop: L3 for Dp} yield,
  \begin{align*}
I_4 &\leq -  (m-1)\frac{7A}{16}\iint \wm^3 +C(m-1) \iint \wm^2+(m-1)C .
\end{align*}

 We continue with $I_3$. We have,
 \begin{align*}
 \bgamma\cdot\nabla (a^{-1}p\wm)\wm &= \bgamma \cdot \nabla (a^{-1} p)\wm^2 + a^{-1}p \bgamma\cdot\nabla \wm \wm\\
 & =   \bgamma \cdot \nabla (a^{-1} p)\wm^2 +\frac{1}{2} a^{-1}p \bgamma\cdot\nabla (\wm^2),
 \end{align*}
 so that, upon integrating by parts, and then using Young's inequality, we obtain,
 \begin{align*}
 I_3& = (m-1)\iint \wm^2 \left(\bgamma \cdot \nabla (a^{-1} p)- \frac{1}{2}\nabla\cdot(a^{-1} p \bgamma)\right)\\
& \leq \frac{A}{16}(m-1)\iint \wm^3 + C(m-1)\iint \left|\bgamma \cdot \nabla (a^{-1} p)- \frac{1}{2}\nabla\cdot(a^{-1} p \bgamma)\right|^{3}\\
& \leq \frac{A}{16}(m-1)\iint \wm^3 + C(m-1),
 \end{align*}
 where the final inequality follows, after a number of standard manipulations, from Proposition \ref{Prop: L3 for Dp} and Assumption \ref{Assump: a,b,Phi}. 

 For $I_2$, we use Young's inequality twice to obtain,
 \begin{align*}
 I_2&= C\iint \left(1+  |\nabla p|^2  +|p_t| \right)\wm\\
 &\leq C\iint \wm +C\iint |\nabla p|^3+\tilde{C}\iint \wm^3 + C\iint |p_t|^2 +C\iint \wm^2\\
 &\leq C\iint \wm +\tilde{C}\iint \wm^3  +C\iint \wm^2 +C,
 \end{align*}
where the final inequality follows from Proposition \ref{Prop: L3 for Dp} and Lemma \ref{lem:basic}(\ref{item:pt}) together with H\"{o}lder's inequality. Here again we have $\tilde{C} = \frac{3}{8d\sup a}$.

 To obtain an estimate on $I_1$, we note 
 \[
 2a^{-1}\wm \nabla p\cdot \nabla \wm = a^{-1}\nabla p\cdot \nabla (\wm^2).
 \]
Using this in the definition of $I_1$ and then integrating by parts  yields,
 \begin{align*}
 I_1&=-\iint \nabla\cdot( a^{-1}\nabla p) \wm^2 = -\iint a^{-1}\wm^2 \Delta p +\nabla (a^{-1})\cdot \nabla p \wm^2.
 \end{align*}
 Using \eqref{eq:w Del p}, followed by Young's inequality, we find,
  \begin{align*}
 I_1&=-\iint a^{-1}\wm^2 (w- \bgamma\cdot \nabla p -\tPhi(p)) +\nabla (a^{-1})\cdot \nabla p \wm^2\\
 & =\iint a^{-1}\wm^3 +a^{-1}\wm^2  \nabla p\cdot(\bgamma -\nabla (a^{-1}))+ \wm^2a^{-1}\tPhi(p)\\
 &\leq \iint a^{-1}\wm^3 +\iint\wm^3 +C\iint |a^{-1}\nabla p\cdot(\bgamma -\nabla (a^{-1}))|^3+ \iint\wm^2a^{-1}\tPhi(p)\\
 &\leq (\sup a^{-1} +1)\iint \wm^3 +C\iint\wm^2 +C,
  \end{align*}
  where the final inequality follows from Proposition \ref{Prop: L3 for Dp}, Lemma \ref{lem:basic}, and Assumption \ref{Assump: a,b,Phi}.

  We now put everything together. By Assumption \ref{Assump: initial data conditions}, the lefthand side of \eqref{eq:wm3} is bounded uniformly in $m$. Thus,  \eqref{eq:wm3} and the estimates on the terms $I_j$, $j=1,...,4$, imply, for $m\geq 2$,
  \begin{align*}
 -C &\leq -\tilde{C}\iint \wm^3 -   C\iint \wm^2  +\left((\sup a^{-1} +1) \iint \wm^3 +C\iint\wm^2 +C\right)\\
 &+\left(C\iint \wm +\tilde{C}\iint \wm^3  +C\iint \wm^2 +C\right)+ \left( \frac{A}{16}(m-1)\iint \wm^3 + C(m-1)\right)\\
 &+\left(-  (m-1)\frac{7A}{16}\iint \wm^3 +C(m-1) \iint \wm^2+(m-1)C\right) \\
  &= (-(m-1)\frac{6A}{16} +\sup a^{-1} +1)\iint \wm^3 + C(m-1)\iint\wm^2 + C\iint\wm + C(m-1).
 \end{align*}
Recall that the integral is taken over $B_{CT}\times (0,T)$. Thus, H\"{o}lder's inequality yields,
\[
((m-1)\frac{6A}{16} -(\sup a^{-1} +1))\iint \wm^3\leq C(m-1)\left(\iint\wm^3\right)^{\frac{2}{3}} + C\left(\iint\wm^3\right)^{\frac13} + C(m-1).
\]
  Therefore, for $m$ as in \eqref{eq:AB m}, 
we obtain 
\[
\iint \wm^3\leq C\left(\iint\wm^3\right)^{\frac{2}{3}} + C\left(\iint\wm^3\right)^{\frac13} + C,
\]
which implies the first estimate in \eqref{eq:prop wm3}, as desired.

To obtain the second  estimate in \eqref{eq:prop wm3}, we use the expression   \eqref{eq:w Del p} for $w$, as well as  Lemma \ref{lem:basic}, and Assumption \ref{Assump: a,b,Phi}, to find,
\[
\iint |\Delta p| \leq \iint |w| +\iint |\bgamma \cdot \nabla p+\tPhi(p)| \leq \iint |w| + C.
\]
Next, we note
\begin{align*}
\iint |w| &= 2\iint \wm +\iint w  \leq C\left(\iint \wm^3\right)^{1/3} +\iint w  \leq C+ \iint w 
\end{align*}
  where we've used H\"{o}lder's inequality, followed by  the first estimate in \eqref{eq:prop wm3}. Now, recalling the expression 
  \eqref{eq:w Del p} for $w$, and then using the fact that $p$ is compactly supported, yields,
  \[
  \iint w = \iint\Delta p + \bgamma \cdot \nabla p+\tPhi(p)= \iint  \bgamma \cdot \nabla p+\tPhi(p) \leq C,
  \]
where the final inequality follows from Lemma \ref{lem:basic} and Assumption \ref{Assump: a,b,Phi}. Putting the previous three estimates therefore yields the second  estimate in \eqref{eq:prop wm3}.

\end{proof}

\section{Proof of the main result}
\label{s:main pf}

We have now gathered all of the ingredients for:

\begin{proof}[Proof of Theorem \ref{Prop: main prop}]

 Lemma \ref{lem:basic} 
yields that $u_m$, $p_m$ are uniformly bounded in $W^{1,1} (Q_T)$. By the Rellich-Kondrachov theorem, there exist $u_\infty,p_\infty\in L^1(Q_T)$ such that, up to a subsequence, $u_m$, $p_m$ converge to $u_\infty$, $p_\infty$ strongly in $L^1  (Q_T)$. In addition, we have that the $u_m$, $p_m$, converge to $u_\infty$, $p_\infty$ almost everywhere on $Q_T$. The estimates of Proposition \ref{Prop: L3 for Dp} and Proposition \ref{Prop: AB-est L3}, and an argument identical to that of \cite[Lemma 3.6]{david2021convective},  imply  
 $\nabla p_m \rightarrow \nabla p_\infty$ strongly in $L^2 (Q_T)$.

Using the definition of $p_m$ and rearranging yields,
\begin{align*}
   \frac{u_m}{b} p_m = \LP \frac{m-1 }{m} \RP ^{1/(m-1)} p_m ^{m / (m-1)}. 
\end{align*}
Taking the limit $m\rightarrow \infty$ yields $\frac{u_\infty}{b} p_\infty = p_\infty $ holds a.e., or equivalently, 
\begin{align*}
    \LP 1 - \frac{u_\infty }{b} \RP p_\infty = 0 \text{ a.e.},
\end{align*}
as desired. In addition, the uniform boundedness and $L^1$ convergence of $u_m$, $p_m$, as well as the previous line, imply, for any bounded function $\phi$,
\begin{align}
\label{eq:HS graph in proof}
\lim_{m\rightarrow \infty} \iint_{Q_T} \left|\left(p_m  \frac{u_m}{b} - p_\infty \right)\phi \right|   = 0.
\end{align}.

Next, we observe that the equality
\[
\frac{u_m}{b}\nabla p_m = (m-1) p_m \nabla \frac{u_m}{b},
\]
together with the uniform boundedness of $u_m$ and integrability of $\nabla p_m$ of Lemma \ref{lem:basic}, implies, 
\begin{equation}
    \label{eq:lim pm nabla um}
\lim_{m\rightarrow\infty }\iint_{Q_T} \left| p_m\nabla \frac{u_m}{b}\right| = 
\lim_{m\rightarrow\infty } \frac{1}{m-1}\iint_{Q_T} \left| \frac{u_m}{b} \nabla p_m \right| =0.
\end{equation}

Let $\zeta\in C^\infty_c(\R^d\times [0,T))$; we shall pass to the limit in the weak formulation 
\begin{equation}
    \label{eq:weak 1}
\iint _{Q_T} u_m \partial _t \zeta - \frac{u_m}{a} \nabla p_m \cdot \nabla \zeta + \frac{u_m}{a} \Phi (x,t,p_m) \zeta \, dx\, dt = - \int _{\R^d} u_m^0 (x) \zeta (x,0)\, dx
\end{equation}
of \eqref{eqn: PME}. Integrating by parts in the second term on the lefthand side yields
\begin{align*}
-\iint_{Q_T}\frac{u_m}{a} \nabla p_m \cdot \nabla \zeta &= \iint_{Q_T} p_m\nabla \cdot \left(\frac{u_m}{b}\frac{b}{a} \nabla \zeta\right) \\
&= \iint_{Q_T} p_m \nabla\frac{u_m}{b} \cdot \left(\frac{b}{a} \nabla \zeta\right) + \iint_{Q_T} p_m \frac{u_m}{b}  \nabla \cdot\left(\frac{b}{a} \nabla \zeta\right).
\end{align*}
Taking the limit and using \eqref{eq:lim pm nabla um}, followed by \eqref{eq:HS graph in proof}, gives,
\[
\lim_{m \rightarrow\infty} -\iint_{Q_T}\frac{u_m}{a} \nabla p_m \cdot \nabla \zeta = \lim_{m \rightarrow\infty}\iint_{Q_T} p_m \frac{u_m}{b}  \nabla \cdot\left(\frac{b}{a} \nabla \zeta\right)= \iint_{Q_T} p_\infty \nabla \cdot \left(\frac{b}{a} \nabla \zeta\right).
\]
Upon taking $m\rightarrow \infty$ in \eqref{eq:weak 1}, the previous line, 
the convergence properties of $u_m$ and $p_m$, and Assumption \ref{Assump: initial data conditions} yield,
\[
\iint_{Q_T} u_\infty\partial_t \zeta  - \iint_{Q_T} \nabla p_\infty  \cdot \left(\frac{b}{a} \nabla \zeta\right) + \frac{u_\infty}{a} \Phi (x,t,p_\infty) \zeta \, dx\, dt = - \int _{\R^d} u^0 (x) \zeta (x,0)\, dx,
\]
which means $(u_\infty, p_\infty)$ satisfies \eqref{eq:limiting eq} in the weak sense.

Next we establish the complementarity relation  \eqref{eq:complementarity}. To this end, the PDE \eqref{eqn: PME Pressure 2} for the pressure $p_m$ yields, for any 
 $\zeta\in C^\infty_c(Q_T)$,
\begin{align*}
        \frac{1}{m-1} \iint _{Q_T} \LP\partial _t p_m- \frac{\vert \nabla p_m \vert ^2}{a} \RP \zeta 
        &= -\iint _{Q_T} \nabla \left(\zeta\frac{p_m}{b}\right) \cdot
        \frac{b}{a}\nabla p_m + \iint _{Q_T} p_m\frac{\zeta}{a} \LP \Phi (x,t,p_m) -  \partial _t b \RP .
            \end{align*} 
The convergence of $\nabla p_m$ in $L^2$, as well as the other boundedness and convergence estimates previously established, allow us to take $m\rightarrow \infty$ and obtain,
\[
0 = -\iint _{Q_T} \nabla \left(\zeta\frac{p_\infty}{b}\right) \cdot
        \frac{b}{a}\nabla p_\infty + \iint _{Q_T} p_\infty\frac{\zeta}{a} \LP \Phi (x,t,p_\infty) -  \partial _t b \RP ,
\]
as desired. 

\end{proof}

\section{Conflicts of interest statement}
The authors have no conflicts of interest.

\bibliography{reference.bib}
\bibliographystyle{plain}

\end{document}